\patchcmd\Gread@eps{\@inputcheck#1 }{\@inputcheck"#1"\relax}{}{}
\newtheorem{theorem}{Theorem}[section]
\newtheorem{corollary}[theorem]{Corollary}
\newtheorem{observation}[theorem]{Observation}
\newtheorem{remark}[theorem]{Remark}
\newcommand{\proof}{\noindent{\bf Proof.\ }}
\newcommand{\qed}{\hfill $\square$\medskip}
\begin{document}

\title{On the number of isolate dominating sets of certain graphs}

\author{
Nima Ghanbari$^1$
\and
Saeid Alikhani$^{2,}$\footnote{Corresponding author}
}

\date{\today}

\maketitle
\begin{center}
$^1$Department of Informatics, University of Bergen, P.O. Box 7803, 5020 Bergen, Norway\\
$^2$Department of Mathematics, Yazd University, 89195-741, Yazd, Iran\\
\medskip
{\tt Nima.ghanbari@uib.no, alikhani@yazd.ac.ir}
\end{center}


\begin{abstract}
 
Let $G=(V,E)$ be a simple graph. A dominating set of $G$ is a subset $S\subseteq V$ such that every vertex not in $S$ is adjacent to at least one vertex in $S$.
The cardinality of a smallest dominating set of $G$, denoted by $\gamma(G)$, is the domination number of $G$. A dominating set $S$ is an isolate dominating set of $G$, if 
the induced subgraph $G[S]$ has at least one isolated vertex.  The isolate domination number, $\gamma_0(G)$, is the minimum cardinality of an isolate dominating set of $G$.  In this paper,  we  count the number of isolate dominating sets of some specific graphs.

\end{abstract}

\noindent{\bf Keywords:} isolate domination, isolate dominating set, corona, path, cycle.  

\medskip
\noindent{\bf AMS Subj.\ Class.}:05C15

\section{Introduction}

Let $G = (V,E)$ be a simple graph with $n$ vertices. Throughout this paper we consider only simple graphs.  A set $S\subseteq V(G)$ is a  dominating set if every vertex in $V(G)\backslash S$ is adjacent to at least one vertex in $S$.
The  domination number $\gamma(G)$ is the minimum cardinality of a dominating set in $G$. There are various domination numbers in the literature.
For a detailed treatment of domination theory, the reader is referred to \cite{domination}.

A dominating set $S$ of a graph $G$ with no isolated vertex is called a total dominating set of $G$ if the induced subgraph $G[S]$ has no isolated vertex. The total domination number $\gamma_t(G)$ is the minimum cardinality of a total dominating set of $G$.  A dominating set $S$ is an isolate dominating set of $G$, if 
the induced subgraph $G[S]$ has at least one isolated vertex.  The isolate domination number, $\gamma_0(G)$, is the minimum cardinality of an isolate dominating set of $G$.  A dominating set
with cardinality $\gamma(G)$  is  called a {\it $\gamma$-set}.  Also an isolate dominating set of $G$ of cardinality $\gamma_0(G)$ is called a $\gamma_0$-set of $G$.
The study of  isolate  domination in graphs was initiated by Hamid and Balamurugan \cite{Hamid}, and further studied in a number of papers (see for example  \cite{Nader,Hamid2}).

The concept of domination and related invariants have
been generalized in many ways.  Most of the papers published so far deal with structural
aspects of domination, trying to determine exact expressions for $\gamma(G)$  or some upper and/or lower bounds for it. There were no paper concerned with the 
enumerative side of the problem by 2008.

Regarding to enumerative side of dominating sets, domination polynomial of graph has introduced in \cite{saeid1}. The domination polynomial of graph $G$ is the  generating function for the number of dominating sets of  $G$, i.e., $D(G,x)=\sum_{ i=1}^{|V(G)|} d(G,i) x^{i}$ (see \cite{euro,saeid1}).   This  polynomial and its roots has been actively studied in recent
years (see for example \cite{Kot,Oboudi}). 
It is natural to count the number of another kind of dominating sets (\cite{utilitas,weakly,Doslic,DAM}).     
Let ${\cal D}_0(G,i)$ be the family of
isolate  dominating sets of a graph $G$ with cardinality $i$ and let
$d_0(G,i)=|{\cal D}_0(G,i)|$. The generating function for the number of isolate dominating sets of $G$ is $D_0(G,x)=\sum_{i=1}d_0(G,i)x^i$. 

The  corona of two graphs $G_1$ and $G_2$, is the graph
$G_1 \circ G_2$ formed from one copy of $G_1$ and $|V(G_1)|$ copies of $G_2$,
where the ith vertex of $G_1$ is adjacent to every vertex in the ith copy of $G_2$.
The corona $G\circ K_1$, in particular, is the graph constructed from a copy of $G$,
where for each vertex $v\in V(G)$, a new vertex $v'$ and a pendant edge $vv'$ are added.
The  join of two graphs $G_1$ and $G_2$, denoted by $G_1\vee G_2$,
is a graph with vertex set  $V(G_1)\cup V(G_2)$
and edge set $E(G_1)\cup E(G_2)\cup \{uv| u\in V(G_1)$ and $v\in V(G_2)\}$.

\medskip
In the next section, we study the number of isolate dominating (I.D.) sets of specific graphs, especially for paths and cycles. In Section 3, we consider the corona product of two graphs and investigate the number of their isolate dominating sets.

\section{The number of I.D. sets of paths and cycles}

In this section, we study the number of isolate dominating (I.D.) sets of certain graphs.
Let ${\cal D}_0(G,i)$ be the family of
isolate  dominating sets of a graph $G$ with cardinality $i$ and let
$d_0(G,i)=|{\cal D}_0(G,i)|$. 
The following easy theorem is about the number of isolate dominating sets of the complete graph $K_n$ and the star graph $K_{1,n}$:

\begin{observation}
	\begin{enumerate}
		\item [(i)] For any $n\in \mathbb{N}$, $d_0(K_n,1)=n$ and for $i\geq 2$, $d_0(K_n,i)=0$.
		
		\item[(ii)]   
		For  $n\in \mathbb{N}$, $d_0(K_{1,n},1)=d_0(K_{1,n},n)=1$ and for $2\leq i\leq n-1$, $d_0(K_{1,n},i)=0$.
			\end{enumerate}
	\end{observation}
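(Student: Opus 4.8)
The plan is to handle the two parts separately, in each case by directly unpacking the definition of an isolate dominating set. For part (i), consider the complete graph $K_n$. Any single vertex $\{v\}$ dominates all of $K_n$ since $v$ is adjacent to every other vertex, and the induced subgraph on one vertex trivially has an isolated vertex; hence every one of the $n$ singletons is an isolate dominating set, giving $d_0(K_n,1)=n$. For $i\ge 2$, I would argue that no $i$-subset $S$ can be an isolate dominating set: since $K_n$ is complete, any two vertices of $S$ are adjacent, so every vertex of $G[S]$ has degree $i-1\ge 1$ and thus $G[S]$ has no isolated vertex. Therefore $d_0(K_n,i)=0$ for all $i\ge 2$.

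For part (ii), let $c$ denote the center of the star $K_{1,n}$ and $\ell_1,\dots,\ell_n$ its leaves. The singleton $\{c\}$ dominates everything and is an isolate dominating set, while no singleton leaf dominates (it misses the other leaves and, for $n\ge 2$, the center's non-neighbors), so $d_0(K_{1,n},1)=1$. The full leaf set $\{\ell_1,\dots,\ell_n\}$ dominates $c$, has size $n$, and induces an empty graph (all vertices isolated), so it is an isolate dominating set; any other $n$-subset must contain $c$ together with $n-1$ leaves, which fails to dominate the remaining leaf, so $d_0(K_{1,n},n)=1$. For the intermediate range $2\le i\le n-1$, I would split on whether $S$ contains $c$: if $c\notin S$ then $S$ is a proper nonempty set of leaves, which does not dominate the leaves outside $S$; if $c\in S$ then $S$ contains at least one leaf (since $i\ge 2$), and in $G[S]$ that leaf is adjacent to $c$ and $c$ is adjacent to every leaf in $S$, so $G[S]$ has no isolated vertex. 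Either way $S$ is not an isolate dominating set, so $d_0(K_{1,n},i)=0$.

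Both parts are essentially immediate from the definitions, so there is no real obstacle; the only point requiring a little care is the edge case $n=1$ in part (ii), where $K_{1,1}=K_2$ and the two claimed values $d_0(K_{1,n},1)=d_0(K_{1,n},n)=1$ coincide at $i=1$ with value $1$ (a single vertex dominates $K_2$ and is isolated in its own induced subgraph), and the intermediate range $2\le i\le n-1$ is empty, so the statement holds vacuously there. I would state this degenerate case explicitly to keep the claim clean.
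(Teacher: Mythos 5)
Your verification of parts (i) and (ii) for $n\ge 2$ is correct and is exactly the direct unpacking of the definition that the paper intends (the Observation is stated without proof, so this is the only reasonable route). The one slip is your treatment of the degenerate case $n=1$: there $K_{1,1}\cong K_2$, and by your own part (i) we get $d_0(K_2,1)=2$, since \emph{both} singletons are isolate dominating sets --- not $1$ as you assert when you say the two claimed values ``coincide at $i=1$ with value $1$.'' So part (ii) cannot be rescued at $n=1$; the clean fix is to read (ii) as a statement for $n\ge 2$ (or to note explicitly that for $n=1$ the count is already covered by part (i)), rather than to claim the formula still gives $1$ there.
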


The following theorem is about the number of isolate dominating sets of $G_1\vee G_2$:

\begin{theorem} 
	For any graphs $G_1$ and $G_2$ of order at least two,  
	$$d_0(G_1\vee G_2,i)= d_0(G,i)+d_0(G_2,i).$$
\end{theorem}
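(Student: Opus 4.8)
The theorem as stated has a typo: it should read $d_0(G_1\vee G_2,i) = d_0(G_1,i) + d_0(G_2,i)$. The plan is to prove this corrected identity by establishing a bijection between isolate dominating sets of $G_1\vee G_2$ and the disjoint union of the isolate dominating sets of $G_1$ with those of $G_2$.

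The plan is to analyze the structure of an isolate dominating set $S$ of $H = G_1 \vee G_2$. The first observation is that, because every vertex of $G_1$ is adjacent to every vertex of $G_2$ in $H$, any nonempty subset $S \subseteq V(H)$ is already a dominating set of $H$: a vertex outside $S$ in $V(G_1)$ is dominated by any element of $S\cap V(G_2)$ (and if $S\subseteq V(G_1)$, then a vertex outside $S$ in $V(G_1)$ is dominated via any vertex of $V(G_2)\subseteq V(H)\setminus S$... wait — need $S$ to dominate, so if $S\subseteq V(G_1)$ with $S\ne\emptyset$, a vertex $v\in V(G_2)$ is dominated by any $s\in S$; a vertex $u\in V(G_1)\setminus S$ is dominated iff it has a neighbor in $S$ within $G_1$). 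So domination is NOT automatic — I need to be careful. Let me restructure: the key claim is that $S$ is an isolate dominating set of $H$ if and only if either $S$ is an isolate dominating set of $G_1$ (as a subset of $V(G_1)$) or $S$ is an isolate dominating set of $G_2$.

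The main work is the forward direction. Suppose $S$ is an isolate dominating set of $H$, so $H[S]$ has an isolated vertex $x$. Since $x$ has no neighbor in $S$, and $x$ is adjacent in $H$ to \emph{every} vertex of the other part, $S$ must be entirely contained in the part containing $x$ — say $x\in V(G_1)$, forcing $S\subseteq V(G_1)$. Then $H[S] = G_1[S]$, so $x$ is isolated in $G_1[S]$ too, and $S$ must dominate all of $V(G_1)$ in $H$; but vertices of $V(G_1)\setminus S$ can only be dominated by neighbors in $V(G_1)$ (their neighbors in $V(G_2)$ are not in $S$), so $S$ dominates $G_1$. Hence $S$ is an isolate dominating set of $G_1$. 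Conversely, if $S$ is an isolate dominating set of $G_1$, then in $H$ it still dominates $V(G_1)$, it dominates every vertex of $V(G_2)$ via any vertex of $S$ (here I use $|V(G_1)|\geq 2$ is not even needed, just $S\ne\emptyset$), and the isolated vertex of $G_1[S]$ remains isolated in $H[S]=G_1[S]$. The symmetric statement holds for $G_2$. Finally, the two families are disjoint (a set cannot lie in both $V(G_1)$ and $V(G_2)$ since these are disjoint and both $G_i$ have order at least two, so a nonempty $S$ in one cannot equal a nonempty $S$ in the other), and summing cardinalities over sets of size $i$ gives the claimed formula.

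The main obstacle — really the only subtle point — is making sure domination is handled correctly: one must not fall into the trap of thinking every subset of $V(H)$ dominates $H$. The order-at-least-two hypothesis on both $G_i$ is what guarantees the two families are genuinely disjoint (otherwise, e.g., two isolated vertices could cause overlap issues) and is used implicitly so that the "isolated vertex forces containment in one part" argument has content. I would write the proof as the two-way set equality above, then conclude by counting.
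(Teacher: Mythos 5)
Your proof is correct, and it is in fact more complete than the paper's own argument. The paper's proof consists of a single sentence observing that every isolate dominating set of $G_1$ (or of $G_2$) remains an isolate dominating set of $G_1\vee G_2$ --- that is, only the easy inclusion, which by itself yields just the inequality $d_0(G_1\vee G_2,i)\ge d_0(G_1,i)+d_0(G_2,i)$. The step you supply --- that an isolated vertex $x$ of $(G_1\vee G_2)[S]$ is adjacent to the entire opposite part, hence forces $S$ to lie wholly inside the part containing $x$, after which $S$ must dominate that part using only its own edges --- is precisely the converse direction needed to turn the inequality into the stated equality, and the paper omits it. Your handling of domination (resisting the temptation to claim every nonempty subset dominates the join) and of disjointness of the two families is also sound; the only quibble is that disjointness follows simply from $V(G_1)\cap V(G_2)=\emptyset$ and the nonemptiness of isolate dominating sets, so the order-at-least-two hypothesis is not really what that step rests on.
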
 
\begin{proof} 
	By  the definition, every isolate dominating set $S_1$ of $G_1$ (or $S_2$ of $G_2$) is an isolated dominating set of $G_1\vee G_2$. Therefore we have the result. \qed
	\end{proof} 

  We consider the  path graph $P_n$ and cycle graph $C_n$. Note that $\gamma_0(P_n)=\gamma_0(C_n)=\lceil\frac{n}{3}\rceil$ (see \cite{Hamid}). We need the following theorem: 
	
	\begin{theorem}{\rm\cite{Saeid}}
	The number of dominating sets of path $P_n$ satisfies  the following recursive relation: 
	\[d(P_n,i)= d(P_{n-1},i-1)+d(P_{n-1},i-2)+d(P_{n-1},i-3).\]

	\end{theorem}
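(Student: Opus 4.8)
The plan is to induct on $n$ and build the recursion by conditioning on the endpoint of the path. Label the vertices of $P_n$ as $v_1, v_2, \ldots, v_n$ in order, so that deleting $v_n$ leaves the subpath $P_{n-1}$ on $v_1, \ldots, v_{n-1}$. Every dominating set $S$ of $P_n$ must dominate the endpoint $v_n$, and since $v_n$ has the single neighbour $v_{n-1}$, this forces $v_n \in S$ or $v_{n-1} \in S$. I would partition the family $\mathcal{D}(P_n,i)$ of dominating sets of size $i$ according to the local configuration on the tail $\{v_{n-2}, v_{n-1}, v_n\}$ and match each class with dominating sets of $P_{n-1}$, aiming for the three size-classes $i-1$, $i-2$, $i-3$ on the right-hand side. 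Before the inductive step I would fix the boundary conventions ($d(P_n,i)=0$ for $i\le 0$ or $i>n$) and write out $d(P_1,i)$, $d(P_2,i)$, $d(P_3,i)$ by hand, since these small cases are exactly where the index pattern is decided.

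For the inductive step the real content is the tail analysis. Given $S\in\mathcal{D}(P_n,i)$, I would split into cases by which of $v_n, v_{n-1}, v_{n-2}$ lie in $S$, and in each case delete a controlled subset of tail vertices so that what remains is a dominating set of $P_{n-1}$. The intention is that the three cases strip away one, two, or three vertices, producing $d(P_{n-1},i-1)$, $d(P_{n-1},i-2)$, and $d(P_{n-1},i-3)$ respectively, and that no two cases overlap. For the converse I would check that each dominating set of $P_{n-1}$ of the relevant size extends in exactly one way to a member of the corresponding class of $\mathcal{D}(P_n,i)$, so that the partition furnishes a bijection and hence the claimed equality of counts.

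The hard part will be making this correspondence with the \emph{fixed} shorter path $P_{n-1}$ exact. In generating-function form the claimed recursion reads $D(P_n,x) = (x + x^2 + x^3)\,D(P_{n-1},x)$, so the three terms amount to adjoining one, two, or three vertices' worth of size to a dominating set of $P_{n-1}$; yet passing from $P_{n-1}$ to $P_n$ adds only the single vertex $v_n$. Reconciling this is delicate precisely at the large sizes, where $P_n$ has only $n$ vertices while the right-hand side naively reaches size $n+2$, so the top-size configurations must be controlled with care. I would settle this point by testing the identity explicitly on $P_2$ and $P_3$ and pinning down the boundary conventions before committing to the general induction, as that is where the exact shape of the three-term recursion has to be confirmed.
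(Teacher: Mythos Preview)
The paper does not give its own proof of this theorem; it is simply quoted from the cited reference, so there is nothing to compare your argument against.

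More to the point, the identity \emph{as printed} is false, and your closing paragraph has correctly detected the problem without quite naming it. In generating-function form the printed recurrence reads $D(P_n,x)=(x+x^2+x^3)\,D(P_{n-1},x)$, which would force $D(P_n,x)$ to have degree $n+2$; and numerically, for $n=4$, $i=2$ it gives
\[
d(P_4,2)\;\stackrel{?}{=}\;d(P_3,1)+d(P_3,0)+d(P_3,-1)=1+0+0=1,
\]
whereas in fact $d(P_4,2)=4$ (the sets $\{v_1,v_3\},\{v_1,v_4\},\{v_2,v_3\},\{v_2,v_4\}$). The statement contains a typographical error: the correct recurrence from the cited source is
\[
d(P_n,i)=d(P_{n-1},i-1)+d(P_{n-2},i-1)+d(P_{n-3},i-1),
\]
equivalently $D(P_n,x)=x\bigl(D(P_{n-1},x)+D(P_{n-2},x)+D(P_{n-3},x)\bigr)$, which has the right degree and checks on small cases.

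Your tail-conditioning plan is exactly the right idea, and for the \emph{corrected} identity it goes through without the obstruction you were worried about. One clean partition: for a dominating set $S$ of $P_n$, (a) if $v_n\in S$ then $S\setminus\{v_n\}$ need not dominate $v_{n-1}$, so instead split according to whether $v_{n-1}\in S$; the cases $v_n\in S$, $v_n\notin S\ \&\ v_{n-1}\in S\ \&\ v_{n-2}\in S$, and $v_n\notin S\ \&\ v_{n-1}\in S\ \&\ v_{n-2}\notin S$ biject (after deleting the appropriate tail) with dominating sets of $P_{n-1}$, $P_{n-2}$, $P_{n-3}$ respectively, each of size $i-1$. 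So your approach is sound; the difficulty you sensed is a typo in the statement, not a gap in your method.
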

	
The following theorem gives the explicit formula for the number of dominating sets of $P_n$ (\cite{Llano}):

\begin{theorem} 
	For every $n\geq 1$, $d(P_n,k)=\displaystyle\sum_{m=0}^{\lfloor\frac{n-k}{2}\rfloor+1}{k-1 \choose n-k-m}{n-k-m+2 \choose m}.$ 
	\end{theorem}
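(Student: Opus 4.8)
The plan is to prove the closed formula by showing that both sides count the same collection of objects, namely dominating sets of $P_n$ of size $k$, via a standard bijective/combinatorial argument on compositions. First I would recall the well-known correspondence between a dominating set $S$ of $P_n$ with vertices labelled $1,\dots,n$ and the sequence of ``gaps'' it induces: if $S$ has $k$ vertices, reading the path from left to right produces a word in which each chosen vertex is followed by a block of unchosen vertices, and domination forces every such block to have length $0$, $1$, or $2$ (a block of length $3$ would leave its middle vertex undominated), with the additional boundary condition that the block before the first chosen vertex and the block after the last chosen vertex may each have length at most $1$. So a dominating set of size $k$ is equivalent to a solution in nonnegative integers of $a_0 + a_1 + \cdots + a_k = n-k$ with $a_0,a_k \le 1$ and $a_1,\dots,a_{k-1}\le 2$.

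Next I would count these constrained compositions. The natural tool is inclusion–exclusion on which of the interior parts $a_1,\dots,a_{k-1}$ are ``oversized,'' i.e. equal to $2$ after we subtract; more precisely, write each interior $a_j = 1 + b_j$ is not quite right, so instead I would handle the two boundary parts separately (each contributes a factor encoded by a small polynomial $1+x$ in the generating function) and the $k-1$ interior parts each contribute $1+x+x^2 = \frac{1-x^3}{1-x}$. Thus the generating function for $d(P_n,k)$ in the variable $x$ tracking $n-k$ is $(1+x)^2 (1+x+x^2)^{k-1}$, and $d(P_n,k) = [x^{n-k}] (1+x)^2\bigl(\tfrac{1-x^3}{1-x}\bigr)^{k-1}$. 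Expanding $(1-x^3)^{k-1}$ by the binomial theorem gives a sum over $m$ of $(-1)^m\binom{k-1}{m} x^{3m}$, while $(1+x)^2 (1-x)^{-(k-1)}$ expands into binomial coefficients; extracting the coefficient of $x^{n-k}$ and simplifying the resulting double sum should collapse to the single sum $\sum_{m} \binom{k-1}{n-k-m}\binom{n-k-m+2}{m}$ claimed in the statement, with the range $0 \le m \le \lfloor (n-k)/2\rfloor + 1$ coming from the support of the nonzero terms.

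An alternative, perhaps cleaner, route avoids generating functions entirely: interpret $\binom{n-k-m+2}{m}$ as the number of ways to choose the positions of the ``gap-2'' interior blocks among a reduced composition, and $\binom{k-1}{n-k-m}$ as the number of ways to distribute the remaining total among the remaining parts as $0/1$ blocks, so that the product counts dominating sets with exactly $m$ interior blocks of size $2$; summing over $m$ then gives all dominating sets of size $k$. I would carry this out by first fixing $m$, choosing which interior positions are the size-$2$ blocks, subtracting $2m$ from $n-k$, and then counting binary compositions of the remainder into the $k+1$ slots (two of which are capped at $1$ automatically, which is where the $+2$ in the upper binomial comes from via a ``stars and bars with two special bins'' count).

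The main obstacle I expect is the bookkeeping in the final simplification: whichever route I take, I must reconcile the boundary parts (capped at $1$) with the interior parts (capped at $2$) so that everything lands in exactly the two binomial factors of the stated formula, and I must verify the summation range carefully, since the naive inclusion–exclusion range and the stated range $\lfloor (n-k)/2\rfloor+1$ differ by vanishing terms. I would also sanity-check the formula against Theorem 2.4's recurrence $d(P_n,k) = d(P_{n-1},k-1)+d(P_{n-1},k-2)+d(P_{n-1},k-3)$ and against small cases ($P_1, P_2, P_3$) to make sure no off-by-one in the boundary conditions has crept in. Everything else — the binomial expansions and coefficient extraction — is routine.
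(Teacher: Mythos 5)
First, a point of comparison: the paper does not prove this theorem at all --- it is quoted from Arocha and Llano \cite{Llano} --- so there is no in-paper argument to measure you against. Your reduction is nonetheless the right one (and is essentially the one in that reference): a dominating $k$-set of $P_n$ corresponds bijectively to a composition $a_0+a_1+\cdots+a_k=n-k$ with $a_0,a_k\in\{0,1\}$ and $a_1,\dots,a_{k-1}\in\{0,1,2\}$, whence $d(P_n,k)=[x^{n-k}]\,(1+x)^2(1+x+x^2)^{k-1}$. That part of your proposal is correct and complete, and it checks against small cases and the recurrence.

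The gap is in the final coefficient extraction, which is also the only nontrivial step. Expanding $(1+x+x^2)^{k-1}$ as $(1-x^3)^{k-1}(1-x)^{-(k-1)}$ yields an alternating double sum in two free indices, and passing from that to the stated single positive sum is not a routine ``collapse'' --- it requires a genuine binomial identity that you have not supplied. The expansion that lands on the stated formula in one line is instead $1+x+x^2=1+x(1+x)$:
$$[x^{n-k}]\,(1+x)^2\sum_{j\ge 0}\binom{k-1}{j}x^j(1+x)^j=\sum_{j\ge 0}\binom{k-1}{j}\binom{j+2}{n-k-j},$$
and the substitution $m=n-k-j$ gives exactly $\sum_m\binom{k-1}{n-k-m}\binom{n-k-m+2}{m}$, with the upper limit $\lfloor\frac{n-k}{2}\rfloor+1$ coming from the requirement $m\le j+2$, i.e.\ $2m\le n-k+2$. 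Relatedly, your ``alternative'' combinatorial route attaches the binomials to the wrong objects: $m$ is not the number of interior gaps of size $2$ (choosing those would contribute a factor $\binom{k-1}{m}$, which is not what appears); rather $j=n-k-m$ counts the \emph{nonempty} interior gaps, chosen in $\binom{k-1}{j}$ ways, and $\binom{j+2}{m}$ distributes $m$ optional extra units among those $j$ gaps and the two boundary gaps. With either of these repairs the proof closes; as written, the decisive computation is deferred, and the route you defer it to is harder than advertised.
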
 
	
		\begin{figure}
		\begin{center}
			\psscalebox{0.56 0.56}
			{				\begin{pspicture}(0,-7.87)(12.97,3.69)
				\psdots[linecolor=black, dotsize=0.4](1.32,2.98)
				\psdots[linecolor=black, dotsize=0.4](2.12,2.98)
				\psdots[linecolor=black, dotsize=0.4](2.92,2.98)
				\psdots[linecolor=black, dotsize=0.4](3.72,2.98)
				\psdots[linecolor=black, dotsize=0.4](4.52,2.98)
				\psdots[linecolor=black, dotsize=0.4](5.32,2.98)
				\psline[linecolor=black, linewidth=0.08](0.92,1.78)(6.12,1.78)(6.12,1.78)
				\psline[linecolor=black, linewidth=0.08](0.92,0.98)(0.92,0.98)(6.12,0.98)(6.12,0.98)
				\psline[linecolor=black, linewidth=0.08](2.52,1.78)(2.52,0.98)(2.52,0.98)
				\psline[linecolor=black, linewidth=0.08](3.32,0.98)(3.32,0.18)(3.32,0.18)
				\psline[linecolor=black, linewidth=0.08](0.92,0.18)(6.12,0.18)(6.12,0.18)
				\psline[linecolor=black, linewidth=0.08](3.32,0.18)(3.32,-0.62)(3.32,-0.62)
				\psline[linecolor=black, linewidth=0.08](0.92,-0.62)(6.12,-0.62)(6.12,-0.62)
				\psdots[linecolor=black, dotsize=0.4](1.32,1.38)
				\psdots[linecolor=black, dotsize=0.4](2.12,0.58)
				\psdots[linecolor=black, dotsize=0.4](2.12,-0.22)
				\psdots[linecolor=black, dotsize=0.4](1.32,-0.22)
				\psdots[linecolor=black, dotstyle=o, dotsize=0.4, fillcolor=white](2.12,1.38)
				\psdots[linecolor=black, dotstyle=o, dotsize=0.4, fillcolor=white](2.92,0.58)
				\psdots[linecolor=black, dotstyle=o, dotsize=0.4, fillcolor=white](1.32,0.58)
				\psdots[linecolor=black, dotstyle=o, dotsize=0.4, fillcolor=white](2.92,-0.22)
				\psdots[linecolor=black, dotstyle=o, dotsize=0.4, fillcolor=white](3.72,-1.02)
				\psline[linecolor=black, linewidth=0.08](0.92,-2.22)(0.92,1.78)(0.92,1.78)
				\psline[linecolor=black, linewidth=0.08](0.92,-1.42)(6.12,-1.42)(6.12,-1.42)
				\psline[linecolor=black, linewidth=0.08](0.92,-2.22)(6.12,-2.22)(6.12,-2.22)
				\psdots[linecolor=black, dotstyle=o, dotsize=0.4, fillcolor=white](1.32,-1.02)
				\psdots[linecolor=black, dotsize=0.4](2.12,-1.02)
				\psdots[linecolor=black, dotsize=0.4](2.92,-1.02)
				\psline[linecolor=black, linewidth=0.08](4.12,-0.62)(4.12,-2.22)(4.12,-2.22)
				\psdots[linecolor=black, dotsize=0.4](1.32,-1.82)
				\psdots[linecolor=black, dotsize=0.4](2.12,-1.82)
				\psdots[linecolor=black, dotsize=0.4](2.92,-1.82)
				\psdots[linecolor=black, dotstyle=o, dotsize=0.4, fillcolor=white](3.72,-1.82)
				\psdots[linecolor=black, dotstyle=o, dotsize=0.4, fillcolor=white](4.52,-2.62)
				\psdots[linecolor=black, dotstyle=o, dotsize=0.4, fillcolor=white](1.32,-2.62)
				\psline[linecolor=black, linewidth=0.08](0.92,-1.42)(0.92,-3.82)(0.92,-3.82)
				\psline[linecolor=black, linewidth=0.08](0.92,-3.82)(6.12,-3.82)(6.12,-3.82)
				\psline[linecolor=black, linewidth=0.08](0.92,-3.02)(6.12,-3.02)(6.12,-3.02)
				\psdots[linecolor=black, dotsize=0.4](2.12,-2.62)
				\psdots[linecolor=black, dotsize=0.4](2.92,-2.62)
				\psdots[linecolor=black, dotsize=0.4](3.72,-2.62)
				\psline[linecolor=black, linewidth=0.08](4.92,-2.22)(4.92,-3.82)(4.92,-3.82)
				\psdots[linecolor=black, dotsize=0.4](1.32,-3.42)
				\psdots[linecolor=black, dotsize=0.4](2.12,-3.42)
				\psdots[linecolor=black, dotsize=0.4](2.92,-3.42)
				\psdots[linecolor=black, dotsize=0.4](3.72,-3.42)
				\psdots[linecolor=black, dotsize=0.4](2.12,-4.22)
				\psdots[linecolor=black, dotsize=0.4](2.92,-4.22)
				\psdots[linecolor=black, dotsize=0.4](3.72,-4.22)
				\psdots[linecolor=black, dotsize=0.4](4.52,-4.22)
				\psdots[linecolor=black, dotstyle=o, dotsize=0.4, fillcolor=white](4.52,-3.42)
				\psdots[linecolor=black, dotstyle=o, dotsize=0.4, fillcolor=white](5.32,-4.22)
				\psdots[linecolor=black, dotstyle=o, dotsize=0.4, fillcolor=white](1.32,-4.22)
				\psline[linecolor=black, linewidth=0.08](5.72,-3.82)(5.72,-4.62)(0.92,-4.62)(0.92,-3.82)(0.92,-3.82)
				\psline[linecolor=black, linewidth=0.08](5.32,-4.62)(6.12,-4.62)(6.12,-4.62)
				\psdots[linecolor=black, dotsize=0.1](2.12,-5.02)
				\psdots[linecolor=black, dotsize=0.1](2.12,-5.42)
				\psdots[linecolor=black, dotsize=0.1](2.12,-5.82)
				\psline[linecolor=black, linewidth=0.08](0.92,-4.62)(0.92,-6.22)(6.12,-6.22)(4.52,-6.22)
				\psline[linecolor=black, linewidth=0.08](0.92,-6.22)(0.92,-7.02)(6.12,-7.02)(6.12,-7.02)
				\psdots[linecolor=black, dotsize=0.4](2.12,-6.62)
				\psdots[linecolor=black, dotsize=0.4](2.92,-6.62)
				\psdots[linecolor=black, dotsize=0.4](3.72,-6.62)
				\psdots[linecolor=black, dotsize=0.4](4.52,-6.62)
				\psdots[linecolor=black, dotsize=0.4](5.32,-6.62)
				\psdots[linecolor=black, dotsize=0.4](2.12,-7.42)
				\psdots[linecolor=black, dotsize=0.4](2.92,-7.42)
				\psdots[linecolor=black, dotsize=0.4](3.72,-7.42)
				\psdots[linecolor=black, dotsize=0.4](4.52,-7.42)
				\psdots[linecolor=black, dotsize=0.4](5.32,-7.42)
				\psline[linecolor=black, linewidth=0.08](0.92,-7.02)(0.92,-7.82)(6.12,-7.82)(6.12,-7.82)
				\psdots[linecolor=black, dotsize=0.4](11.72,2.98)
				\psdots[linecolor=black, dotsize=0.4](12.52,2.98)
				\psline[linecolor=black, linewidth=0.08](6.12,1.78)(12.92,1.78)(12.92,-7.82)(6.12,-7.82)(6.12,-7.82)
				\psline[linecolor=black, linewidth=0.08](5.72,-7.02)(12.92,-7.02)(12.92,-7.02)
				\psline[linecolor=black, linewidth=0.08](6.12,-6.22)(12.92,-6.22)(12.92,-6.22)
				\psline[linecolor=black, linewidth=0.08](6.12,-4.62)(12.92,-4.62)(12.92,-4.62)
				\psline[linecolor=black, linewidth=0.08](6.12,-3.82)(12.92,-3.82)(12.92,-3.82)
				\psline[linecolor=black, linewidth=0.08](6.12,-3.02)(12.92,-3.02)(12.92,-3.02)
				\psline[linecolor=black, linewidth=0.08](6.12,-2.22)(12.92,-2.22)(12.92,-2.22)
				\psline[linecolor=black, linewidth=0.08](6.12,-1.42)(6.12,-1.42)(6.12,-1.42)(6.12,-1.42)(12.92,-1.42)(12.92,-1.42)
				\psline[linecolor=black, linewidth=0.08](6.12,-0.62)(12.92,-0.62)(12.92,-0.62)
				\psline[linecolor=black, linewidth=0.08](6.12,0.18)(12.92,0.18)(12.92,0.18)
				\psline[linecolor=black, linewidth=0.08](5.72,0.98)(12.92,0.98)(12.92,0.98)
				\rput[bl](1.14,3.38){${v_1}$}
				\rput[bl](1.96,3.4){${v_2}$}
				\rput[bl](2.7,3.4){${v_3}$}
				\rput[bl](3.52,3.42){${v_4}$}
				\rput[bl](4.38,3.44){${v_5}$}
				\rput[bl](5.16,3.42){${v_6}$}
				\rput[bl](12.38,3.26){${v_n}$}
				\rput[bl](11.4,3.22){${v_{n-1}}$}
				\psdots[linecolor=black, dotsize=0.1](6.12,-6.62)
				\psdots[linecolor=black, dotsize=0.1](6.52,-6.62)
				\psdots[linecolor=black, dotsize=0.1](6.92,-6.62)
				\psdots[linecolor=black, dotsize=0.1](6.12,-7.42)
				\psdots[linecolor=black, dotsize=0.1](6.52,-7.42)
				\psdots[linecolor=black, dotsize=0.1](6.92,-7.42)
				\psdots[linecolor=black, dotsize=0.4](7.72,-6.62)
				\psdots[linecolor=black, dotsize=0.4](7.72,-7.42)
				\psdots[linecolor=black, dotsize=0.4](8.52,-7.42)
				\psdots[linecolor=black, dotstyle=o, dotsize=0.4, fillcolor=white](8.52,-6.62)
				\psdots[linecolor=black, dotstyle=o, dotsize=0.4, fillcolor=white](9.32,-7.42)
				\psline[linecolor=black, linewidth=0.08](8.92,-6.22)(8.92,-7.02)(8.92,-7.02)
				\psline[linecolor=black, linewidth=0.08](9.72,-7.02)(9.72,-7.82)(9.72,-7.82)
				\psdots[linecolor=black, dotstyle=o, dotsize=0.4, fillcolor=white](1.32,-7.42)
				\psdots[linecolor=black, dotsize=0.4](1.32,-6.62)
				\rput[bl](0.34,1.32){1}
				\rput[bl](0.3,0.5){2}
				\rput[bl](0.28,-0.3){3}
				\rput[bl](0.26,-1.12){4}
				\rput[bl](0.28,-1.94){5}
				\rput[bl](0.28,-2.76){6}
				\rput[bl](0.22,-3.58){7}
				\rput[bl](0.22,-4.32){8}
				\rput[bl](0.02,-6.76){2i-3}
				\rput[bl](0.0,-7.56){2i-2}
				\psline[linecolor=black, linewidth=0.06](1.32,2.98)(5.72,2.98)(5.72,2.98)
				\psline[linecolor=black, linewidth=0.06](11.32,2.98)(12.52,2.98)(12.52,2.98)
				\psdots[linecolor=black, dotsize=0.4](9.32,2.98)
				\psdots[linecolor=black, dotsize=0.4](8.52,2.98)
				\psdots[linecolor=black, dotsize=0.4](7.72,2.98)
				\psline[linecolor=black, linewidth=0.06](7.32,2.98)(9.72,2.98)(9.72,2.98)
				\psdots[linecolor=black, dotsize=0.1](6.12,2.98)
				\psdots[linecolor=black, dotsize=0.1](6.52,2.98)
				\psdots[linecolor=black, dotsize=0.1](6.92,2.98)
				\psdots[linecolor=black, dotsize=0.1](10.12,2.98)
				\psdots[linecolor=black, dotsize=0.1](10.52,2.98)
				\psdots[linecolor=black, dotsize=0.1](10.92,2.98)
				\rput[bl](8.36,3.32){${v_i}$}
				\rput[bl](7.44,3.32){${v_{i-1}}$}
				\rput[bl](9.04,3.3){${v_{i+1}}$}
				\end{pspicture}
			}
		\end{center}
		\caption{\small Making isolate dominating sets of $P_n$ related to the proof of Theorem \ref{path-thm}} \label{path}
	\end{figure}
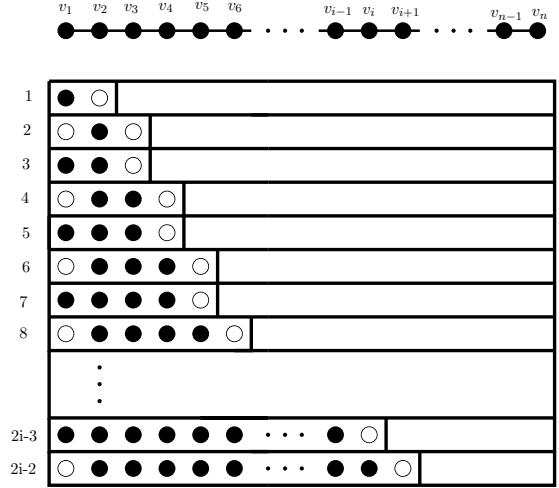

	\begin{theorem}\label{path-thm}
	The number of isolate dominating sets of path $P_n$ with cardinality $i$, $d_0(P_n,i)$, is: 
	\begin{align*}
	d_0(P_n,i) &= d(P_{n-2},i-1) + d(P_{n-3},i-1) \\
	&\quad+ \displaystyle\sum_{k=3}^{i}d_0(P_{n-k},i-k+1)+ \displaystyle\sum_{k=4}^{i+1}d_0(P_{n-k},i-k+2).
	\end{align*}
	\end{theorem}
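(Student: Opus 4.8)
The plan is to partition the family ${\cal D}_0(P_n,i)$ of isolate dominating sets of $P_n=v_1v_2\cdots v_n$ with cardinality $i$ according to two pieces of data: whether or not $v_1\in S$, and where the first vertex outside $S$ sits once we pass the initial run of chosen vertices (this branching is what Figure~\ref{path} depicts). If $v_1\notin S$ then $v_1$ must be dominated, so $v_2\in S$; let $k$ be the least index with $v_k\notin S$ if $v_1\in S$, and the least index at least $3$ with $v_k\notin S$ if $v_1\notin S$. Setting aside the two sets $V(P_n)$ and $V(P_n)\setminus\{v_1\}$ (for $n\ge 3$ neither is isolate dominating, as $G[S]$ then has no isolated vertex), every $S\in{\cal D}_0(P_n,i)$ will fall into exactly one of four types: $(1)$ $v_1\in S$, $v_2\notin S$; $(2)$ $v_1\notin S$, $v_3\notin S$; $(3_k)$ $v_1\in S$, $v_2,\dots,v_{k-1}\in S$ and $v_k\notin S$ with $k\ge 3$; and $(4_k)$ $v_1\notin S$, $v_2,\dots,v_{k-1}\in S$ and $v_k\notin S$ with $k\ge 4$.

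For type $(1)$ the vertex $v_1$ is already isolated in $G[S]$, so here $S$ is isolate dominating if and only if it is dominating; since $v_2\notin S$ is dominated by $v_1$, this reduces to $S\cap\{v_3,\dots,v_n\}$ being a dominating set of the subpath on $v_3,\dots,v_n$, giving the count $d(P_{n-2},i-1)$. Type $(2)$ is handled the same way, with $v_2$ as the isolated vertex and the subpath on $v_4,\dots,v_n$, giving $d(P_{n-3},i-1)$; these are the first two summands.

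Types $(3_k)$ and $(4_k)$ are the crux. In both, $S$ begins with a block of at least two consecutive chosen vertices and then the hole $v_k\notin S$. I will verify: (i) no block vertex, and not $v_k$, is isolated in $G[S]$, since each block vertex has a neighbour inside the block (hence in $S$) precisely because the block has at least two vertices --- this is what forces $k\ge 3$ in $(3_k)$ and $k\ge 4$ in $(4_k)$; (ii) all of $v_1,\dots,v_k$ are automatically dominated by $S$; and (iii) because the hole $v_k\notin S$ severs the path, both ``$S$ dominates $\{v_{k+1},\dots,v_n\}$'' and ``$G[S]$ has an isolated vertex among $v_{k+1},\dots,v_n$'' depend only on $T:=S\cap\{v_{k+1},\dots,v_n\}$, and match the analogous properties of $T$ in the path $P_{n-k}$ on $v_{k+1},\dots,v_n$ (the boundary vertex $v_{k+1}$, whose left neighbour $v_k$ lies outside $S$, needs a separate line here). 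Points (i)--(iii) together yield that $S$ is isolate dominating if and only if $T$ is an isolate dominating set of $P_{n-k}$; since the fixed prefix is determined by $k$, the map $S\mapsto T$ is a bijection. As the block uses $k-1$ of the $i$ vertices in type $(3_k)$ and $k-2$ of them in type $(4_k)$, we get $|T|=i-k+1$, respectively $i-k+2$, and summing over the admissible $k$ --- the upper limits $k\le i$ and $k\le i+1$ being forced by $|T|\ge 1$ --- produces exactly the two sums $\sum_{k=3}^{i}d_0(P_{n-k},i-k+1)$ and $\sum_{k=4}^{i+1}d_0(P_{n-k},i-k+2)$. Adding the four contributions gives the identity.

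The step I expect to be the main obstacle is point (iii), together with checking that the four types (plus the two discarded sets) genuinely partition ${\cal D}_0(P_n,i)$ with no overlap and no omission: one must argue carefully that cutting at the hole $v_k$ renders both the domination condition and the ``has an isolated vertex'' condition on the tail independent of the prefix, with special attention to the boundary vertex $v_{k+1}$, and the small cases $n\le 3$ should be checked directly against the conventions $d(P_0,0)=1$ and $d(P_m,\cdot)=0$ for $m<0$.
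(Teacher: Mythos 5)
Your proposal is correct and takes essentially the same route as the paper: you condition on the initial block of $S$ up to the first hole $v_k$, reduce the two singleton-prefix cases to ordinary dominating sets of the tail (giving $d(P_{n-2},i-1)$ and $d(P_{n-3},i-1)$) and the longer-block cases to isolate dominating sets of the tail. Your families $(3_k)$ and $(4_k)$ are exactly the paper's cases $(3),(5),(7),\dots,(2i-3)$ and $(4),(6),(8),\dots,(2i-2)$; you are simply more explicit about the bijection, the exhaustiveness of the partition, and the boundary cases.
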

		
	\begin{proof}
	Let $V(P_n)=\{v_1,v_2,....,v_n\}$ (see Figure \ref{path}) and  $S$ be an isolate  dominating  set of $P_n$ with cardinality $i$. If a graph $G$ contains a simple path of length $3k-1$, then every dominating set of $G$ must contain at least $k$ vertices of the path. So since $S$ is a dominating set of $P_n$,  at least one of the three vertices $v_j,v_{j+1},v_{j+2}$ ($1\leq  j \leq n-2$) are in $S$. So we consider the following cases: 
	\begin{enumerate}
	\item[(1)]
	If $v_1\in S$ and $v_2\not\in S$, then ${\cal D}_0(P_n,i)={\cal D}(P_{n-2},i-1)\cup \{v_1\}$. In this case the number of isolate dominating sets of $P_n$ with cardinality $i$ is $d(P_{n-2},i-1)$.  
	
		\item[(2)]
	If $v_1\not\in S, v_2\in S$ and $v_3\not\in S$, then ${\cal D}_0(P_n,i)={\cal D}(P_{n-3},i-1)\cup \{v_2\}$. In this case the number of isolate dominating sets of $P_n$ with cardinality $i$ is $d(P_{n-3},i-1)$.  
	
		\item[(3)]
			If $v_1,v_2\in S,$ and $v_3\not\in S$, then ${\cal D}_0(P_n,i)={\cal D}_0(P_{n-3},i-2)\cup \{v_1,v_2\}$. In this case the number of isolate dominating sets of $P_n$ with cardinality $i$ is $d_0(P_{n-3},i-2)$. 
		
			\item[(4)]
			If $v_2,v_3\in S,$ and $v_1,v_4\not\in S$, then ${\cal D}_0(P_n,i)={\cal D}(P_{n-3},i-2)\cup \{v_2,v_3\}$. In this case the number of isolate dominating sets of $P_n$ with cardinality $i$ is $d(P_{n-4},i-2)$. 
			
			\item[(5)]
				If $v_1,v_2,v_3\in S$ and $v_4\not\in S$, then ${\cal D}_0(P_n,i)={\cal D}_0(P_{n-4},i-2)\cup \{v_1,v_2,v_3\}$. In this case the number of isolate dominating sets of $P_n$ with cardinality $i$ is $d_0(P_{n-4},i-3)$. 
		\end{enumerate}
		Also we have the following cases:
		
		\begin{enumerate}
			\item[(6)]
			If $v_2,v_3,v_4\in S$ and $v_1,v_5\not\in S$, then ${\cal D}_0(P_n,i)={\cal D}_0(P_{n-4},i-3)\cup \{v_2,v_3,v_4\}$. In this case the number of isolate dominating sets of $P_n$ with cardinality $i$ is $d_0(P_{n-5},i-3)$. 	
			\item[(7)]
				If $v_1,v_2,v_3,v_4\in S$ and $v_5\not\in S$, then ${\cal D}_0(P_n,i)={\cal D}_0(P_{n-4},i-2)\cup \{v_1,v_2,v_3,v_4\}$. In this case the number of isolate dominating sets of $P_n$ with cardinality $i$ is $d_0(P_{n-5},i-4)$. 			
			\item[(8)]
			If $v_2,v_3,v_4,v_5\in S$ and $v_1,v_6\not\in S$, then ${\cal D}_0(P_n,i)={\cal D}_0(P_{n-6},i-4)\cup \{v_2,v_3,v_4,v_5\}$. In this case the number of isolate dominating sets of $P_n$ with cardinality $i$ is $d_0(P_{n-6},i-4)$. 	
			
			\medskip
			By continuing these steps we will have the following two end steps: 
			
			\item[(2i-3)]
			If $v_1,v_2,v_3,\ldots,v_{i-1}\in S$ and $v_i\not\in S$, then
			 $${\cal D}_0(P_n,i)={\cal D}_0(P_{n-i},1)\cup \{v_1,v_2,v_3,\ldots,v_{i-1}\}.$$
			 In this case the number of isolate dominating sets of $P_n$ with cardinality $i$ is $d_0(P_{n-i},1)$. 	
			\item[(2i-2)]
			If $v_2,v_3,v_4,\ldots,v_i\in S,$ and $v_1,v_{i+1}\not\in S$, then 
			$${\cal D}_0(P_n,i)={\cal D}_0(P_{n-i-1},1)\cup \{v_2,v_3,v_4,\ldots,v_i\}.$$
			 In this case the number of isolate dominating sets of $P_n$ with cardinality $i$ is $d_0(P_{n-i-1},1)$. 
	\end{enumerate} 
			So we have:
			\begin{align*}
	d_0(P_n,i) & =  d(P_{n-2},i-1) + d(P_{n-3},i-1) + d_0(P_{n-3},i-2) 
	 + d_0(P_{n-4},i-2) \\
	 &\quad + d_0(P_{n-4},i-3)+d_0(P_{n-5},i-3)
	+ d_0(P_{n-5},i-4)+d_0(P_{n-6},i-4)\\
	&\quad  + \ldots  +d_0(P_{n-i},1)+d_0(P_{n-i-1},1)\\
	&= d(P_{n-2},i-1) + d(P_{n-3},i-1) \\
	&\quad+ \displaystyle\sum_{k=3}^{i}d_0(P_{n-k},i-k+1)+ \displaystyle\sum_{k=4}^{i+1}d_0(P_{n-k},i-k+2)
	\end{align*}
	and therefore we have the result.\qed	
	\end{proof}

\[
\begin{footnotesize}
\small{
	\begin{tabular}{r|lcrrrcccccccc}
	$j$&$1$&$2$&$3$&$4$&$5$&$6$&$7$&$8$&$9$&$10$&$11$&$12$\\[0.3ex]
	\hline
	$n$&&&&&&&&&&&&\\
	$1$&1&&&&&&&&&&&\\
	$2$&2&0&&&&&&&&&&\\
	$3$&1&1&0&&&&&&&&&\\
	$4$&0&3&2&0&&&&&&&&\\
	$5$&0&3&7&2&0&&&&&&&\\
	$6$&0&1&10&9&2&0&&&&&&\\
	$7$&0&0&8&19&12&2&0&&&&&\\
	$8$&0&0&4&25&34&15&2&0&&&&\\
	$9$&0&0&1&22&59&52&18&2&0&&&\\
	$10$&0&0&0&13&70&111&74&20&2&0&&\\
	$11$&0&0&0&5&61&167&192&100&24&2&0&\\
	$12$&0&0&0&1&40&191&344&297&130&27&2&0\\
	\end{tabular}}
\end{footnotesize}
\]
\begin{center}
	\noindent{Table 1.} $d_0(P_{n},j)$ The number of isolate dominating sets of $P_n$ with cardinality $j$.
\end{center}

Using Theorem \ref{path-thm}, we obtain $d_0(P_n,j)$ for $1\leq n\leq 12$ as shown in Table 1.

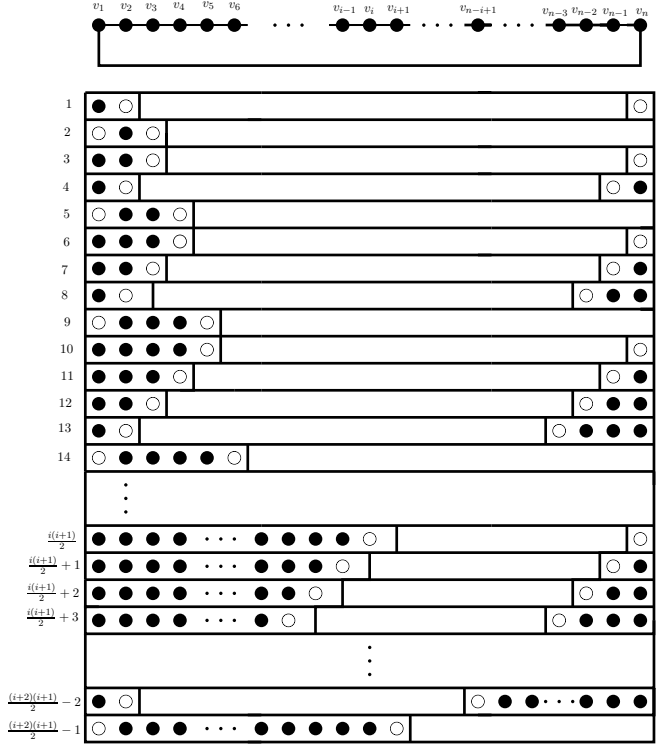
\begin{figure}
	\begin{center}
		\psscalebox{0.450 0.450}
		{
			\begin{pspicture}(0,-12.27)(19.15,9.69)
			\psdots[linecolor=black, dotsize=0.4](2.7,8.98)
			\psdots[linecolor=black, dotsize=0.4](3.5,8.98)
			\psdots[linecolor=black, dotsize=0.4](4.3,8.98)
			\psdots[linecolor=black, dotsize=0.4](5.1,8.98)
			\psdots[linecolor=black, dotsize=0.4](5.9,8.98)
			\psdots[linecolor=black, dotsize=0.4](6.7,8.98)
			\psline[linecolor=black, linewidth=0.08](2.3,6.98)(7.5,6.98)(7.5,6.98)
			\psline[linecolor=black, linewidth=0.08](2.3,6.18)(2.3,6.18)(7.5,6.18)(7.5,6.18)
			\psline[linecolor=black, linewidth=0.08](3.9,6.98)(3.9,6.18)(3.9,6.18)
			\psline[linecolor=black, linewidth=0.08](2.3,5.38)(7.5,5.38)(7.5,5.38)
			\psline[linecolor=black, linewidth=0.08](2.3,4.58)(7.5,4.58)(7.5,4.58)
			\psdots[linecolor=black, dotsize=0.4](2.7,6.58)
			\psdots[linecolor=black, dotsize=0.4](3.5,5.78)
			\psdots[linecolor=black, dotsize=0.4](3.5,4.98)
			\psdots[linecolor=black, dotsize=0.4](2.7,4.98)
			\psdots[linecolor=black, fillstyle=solid, dotstyle=o, dotsize=0.4, fillcolor=white](3.5,6.58)
			\psdots[linecolor=black, fillstyle=solid, dotstyle=o, dotsize=0.4, fillcolor=white](4.3,5.78)
			\psdots[linecolor=black, fillstyle=solid, dotstyle=o, dotsize=0.4, fillcolor=white](2.7,5.78)
			\psdots[linecolor=black, fillstyle=solid, dotstyle=o, dotsize=0.4, fillcolor=white](4.3,4.98)
			\psdots[linecolor=black, fillstyle=solid, dotstyle=o, dotsize=0.4, fillcolor=white](5.1,3.38)
			\psline[linecolor=black, linewidth=0.08](2.3,1.38)(7.5,1.38)(7.5,1.38)
			\psline[linecolor=black, linewidth=0.08](2.3,0.58)(7.5,0.58)(7.5,0.58)
			\psdots[linecolor=black, fillstyle=solid, dotstyle=o, dotsize=0.4, fillcolor=white](2.7,3.38)
			\psdots[linecolor=black, dotsize=0.4](3.5,3.38)
			\psdots[linecolor=black, dotsize=0.4](4.3,3.38)
			\psdots[linecolor=black, dotsize=0.4](2.7,2.58)
			\psdots[linecolor=black, dotsize=0.4](3.5,2.58)
			\psdots[linecolor=black, dotsize=0.4](4.3,2.58)
			\psdots[linecolor=black, dotsize=0.4](5.1,2.58)
			\psdots[linecolor=black, fillstyle=solid, dotstyle=o, dotsize=0.4, fillcolor=white](5.9,0.18)
			\psdots[linecolor=black, fillstyle=solid, dotstyle=o, dotsize=0.4, fillcolor=white](2.7,0.18)
			\psline[linecolor=black, linewidth=0.08](2.3,-1.02)(7.5,-1.02)(7.5,-1.02)
			\psline[linecolor=black, linewidth=0.08](2.3,-0.22)(7.5,-0.22)(7.5,-0.22)
			\psdots[linecolor=black, dotsize=0.4](3.5,0.18)
			\psdots[linecolor=black, dotsize=0.4](4.3,0.18)
			\psdots[linecolor=black, dotsize=0.4](5.1,0.18)
			\psdots[linecolor=black, dotsize=0.4](2.7,-0.62)
			\psdots[linecolor=black, dotsize=0.4](3.5,-0.62)
			\psdots[linecolor=black, dotsize=0.4](4.3,-0.62)
			\psdots[linecolor=black, dotsize=0.4](5.1,-0.62)
			\psdots[linecolor=black, dotsize=0.4](3.5,-3.82)
			\psdots[linecolor=black, dotsize=0.4](4.3,-3.82)
			\psdots[linecolor=black, dotsize=0.4](5.1,-3.82)
			\psdots[linecolor=black, dotsize=0.4](5.9,-3.82)
			\psdots[linecolor=black, fillstyle=solid, dotstyle=o, dotsize=0.4, fillcolor=white](5.9,-0.62)
			\psdots[linecolor=black, fillstyle=solid, dotstyle=o, dotsize=0.4, fillcolor=white](6.7,-3.82)
			\psdots[linecolor=black, fillstyle=solid, dotstyle=o, dotsize=0.4, fillcolor=white](2.7,-3.82)
			\psline[linecolor=black, linewidth=0.08](6.7,-1.82)(7.5,-1.82)(7.5,-1.82)
			\psdots[linecolor=black, dotsize=0.1](3.5,-4.62)
			\psdots[linecolor=black, dotsize=0.1](3.5,-5.02)
			\psdots[linecolor=black, dotsize=0.1](3.5,-5.42)
			\psline[linecolor=black, linewidth=0.08](2.3,-10.62)(2.3,-11.42)(7.5,-11.42)(7.5,-11.42)
			\psdots[linecolor=black, dotsize=0.4](3.5,-6.22)
			\psdots[linecolor=black, dotsize=0.4](4.3,-6.22)
			\psdots[linecolor=black, dotsize=0.4](5.1,-6.22)
			\psdots[linecolor=black, dotsize=0.4](3.5,-11.82)
			\psdots[linecolor=black, dotsize=0.4](4.3,-11.82)
			\psdots[linecolor=black, dotsize=0.4](5.1,-11.82)
			\psline[linecolor=black, linewidth=0.08](2.3,-11.42)(2.3,-12.22)(7.5,-12.22)(7.5,-12.22)
			\psdots[linecolor=black, dotsize=0.4](17.9,8.98)
			\psdots[linecolor=black, dotsize=0.4](18.7,8.98)
			\psline[linecolor=black, linewidth=0.08](7.1,-11.42)(14.3,-11.42)(14.3,-11.42)
			\psline[linecolor=black, linewidth=0.08](7.5,-5.82)(14.3,-5.82)(14.3,-5.82)
			\psline[linecolor=black, linewidth=0.08](7.5,-1.82)(14.3,-1.82)(14.3,-1.82)
			\psline[linecolor=black, linewidth=0.08](7.5,-1.02)(14.3,-1.02)(14.3,-1.02)
			\psline[linecolor=black, linewidth=0.08](7.5,-0.22)(14.3,-0.22)(14.3,-0.22)
			\psline[linecolor=black, linewidth=0.08](7.5,0.58)(14.3,0.58)(14.3,0.58)
			\psline[linecolor=black, linewidth=0.08](7.5,1.38)(7.5,1.38)(7.5,1.38)(7.5,1.38)(14.3,1.38)(14.3,1.38)
			\psline[linecolor=black, linewidth=0.08](7.5,4.58)(14.3,4.58)(14.3,4.58)
			\psline[linecolor=black, linewidth=0.08](7.5,5.38)(14.3,5.38)(14.3,5.38)
			\psline[linecolor=black, linewidth=0.08](7.1,6.18)(14.3,6.18)(14.3,6.18)
			\rput[bl](2.52,9.38){${v_1}$}
			\rput[bl](3.34,9.4){${v_2}$}
			\rput[bl](4.08,9.4){${v_3}$}
			\rput[bl](4.9,9.42){${v_4}$}
			\rput[bl](5.76,9.44){${v_5}$}
			\rput[bl](6.54,9.42){${v_6}$}
			\rput[bl](18.56,9.26){${v_n}$}
			\rput[bl](17.58,9.22){${v_{n-1}}$}
			\psdots[linecolor=black, dotsize=0.4](9.9,-6.22)
			\psdots[linecolor=black, fillstyle=solid, dotstyle=o, dotsize=0.4, fillcolor=white](10.7,-6.22)
			\psline[linecolor=black, linewidth=0.08](11.5,-5.82)(11.5,-6.62)(11.5,-6.62)
			\psline[linecolor=black, linewidth=0.08](11.9,-11.42)(11.9,-12.22)(11.9,-12.22)
			\psdots[linecolor=black, fillstyle=solid, dotstyle=o, dotsize=0.4, fillcolor=white](2.7,-11.82)
			\psdots[linecolor=black, dotsize=0.4](2.7,-6.22)
			\rput[bl](1.72,6.52){1}
			\rput[bl](1.68,5.7){2}
			\rput[bl](1.66,4.9){3}
			\rput[bl](1.64,4.08){4}
			\rput[bl](1.66,3.26){5}
			\rput[bl](1.66,2.44){6}
			\rput[bl](1.6,1.62){7}
			\rput[bl](1.6,0.88){8}
			\psline[linecolor=black, linewidth=0.06](2.7,8.98)(7.1,8.98)(7.1,8.98)
			\psline[linecolor=black, linewidth=0.06](17.5,8.98)(18.7,8.98)(18.7,8.98)
			\psdots[linecolor=black, dotsize=0.4](11.5,8.98)
			\psdots[linecolor=black, dotsize=0.4](10.7,8.98)
			\psdots[linecolor=black, dotsize=0.4](9.9,8.98)
			\psline[linecolor=black, linewidth=0.06](9.5,8.98)(11.9,8.98)(11.9,8.98)
			\rput[bl](10.54,9.32){${v_i}$}
			\rput[bl](9.62,9.32){${v_{i-1}}$}
			\rput[bl](11.22,9.3){${v_{i+1}}$}
			\psline[linecolor=black, linewidth=0.08](6.7,-1.82)(2.3,-1.82)(2.3,-1.82)
			\psline[linecolor=black, linewidth=0.08](2.3,2.18)(14.3,2.18)(14.3,2.18)
			\psline[linecolor=black, linewidth=0.08](2.3,2.98)(14.3,2.98)(14.3,2.98)
			\psline[linecolor=black, linewidth=0.08](2.3,3.78)(14.3,3.78)(14.3,3.78)
			\psline[linecolor=black, linewidth=0.08](7.5,6.98)(14.3,6.98)(13.9,6.98)
			\psdots[linecolor=black, dotsize=0.4](2.7,4.18)
			\psdots[linecolor=black, dotsize=0.4](17.1,8.98)
			\psdots[linecolor=black, dotsize=0.4](16.3,8.98)
			\psline[linecolor=black, linewidth=0.08](15.9,8.98)(17.9,8.98)(17.5,8.98)
			\psline[linecolor=black, linewidth=0.08](2.7,8.98)(2.7,7.78)(18.7,7.78)(18.7,8.98)(18.3,8.98)
			\rput[bl](16.66,9.24){$v_{n-2}$}
			\rput[bl](15.8,9.22){$v_{n-3}$}
			\psline[linecolor=black, linewidth=0.08](14.3,6.98)(19.1,6.98)(19.1,-4.62)(19.1,-4.62)
			\psline[linecolor=black, linewidth=0.08](2.3,-4.22)(2.3,6.98)(2.3,6.98)
			\psline[linecolor=black, linewidth=0.08](14.3,6.18)(19.1,6.18)(19.1,6.18)
			\psline[linecolor=black, linewidth=0.08](13.9,5.38)(19.1,5.38)(19.1,5.38)
			\psline[linecolor=black, linewidth=0.08](13.9,4.58)(19.1,4.58)(19.1,4.58)
			\psline[linecolor=black, linewidth=0.08](14.3,3.78)(19.1,3.78)(19.1,3.78)
			\psline[linecolor=black, linewidth=0.08](14.3,2.98)(19.1,2.98)(18.7,2.98)
			\psline[linecolor=black, linewidth=0.08](13.9,2.18)(19.1,2.18)(19.1,2.18)
			\psline[linecolor=black, linewidth=0.08](14.3,1.38)(19.1,1.38)(19.1,1.38)
			\psline[linecolor=black, linewidth=0.08](14.3,0.58)(19.1,0.58)(18.7,0.58)
			\psline[linecolor=black, linewidth=0.08](14.3,-0.22)(18.7,-0.22)(19.1,-0.22)(19.1,-0.22)
			\psline[linecolor=black, linewidth=0.08](14.3,-1.02)(19.1,-1.02)(19.1,-1.02)
			\psline[linecolor=black, linewidth=0.08](14.3,-1.82)(19.1,-1.82)(19.1,-1.82)
			\psline[linecolor=black, linewidth=0.08](4.7,6.18)(4.7,5.38)(4.7,5.78)
			\psline[linecolor=black, linewidth=0.08](18.3,6.98)(18.3,6.18)(18.3,6.18)
			\psdots[linecolor=black, fillstyle=solid, dotstyle=o, dotsize=0.4, fillcolor=white](18.7,6.58)
			\psline[linecolor=black, linewidth=0.08](4.7,5.38)(4.7,4.58)(4.7,4.58)
			\psline[linecolor=black, linewidth=0.08](18.3,5.38)(18.3,4.58)(18.3,4.58)
			\psdots[linecolor=black, fillstyle=solid, dotstyle=o, dotsize=0.4, fillcolor=white](18.7,4.98)
			\psdots[linecolor=black, fillstyle=solid, dotstyle=o, dotsize=0.4, fillcolor=white](3.5,4.18)
			\psline[linecolor=black, linewidth=0.08](3.9,4.58)(3.9,3.78)(3.9,3.78)
			\psdots[linecolor=black, dotsize=0.4](18.7,4.18)
			\psline[linecolor=black, linewidth=0.08](5.5,3.78)(5.5,2.98)(5.5,2.98)
			\psline[linecolor=black, linewidth=0.08](5.5,2.98)(5.5,2.18)(5.5,2.18)
			\psline[linecolor=black, linewidth=0.08](18.3,2.98)(18.3,2.18)(18.3,2.18)
			\psdots[linecolor=black, fillstyle=solid, dotstyle=o, dotsize=0.4, fillcolor=white](18.7,2.58)
			\psdots[linecolor=black, dotsize=0.4](2.7,1.78)
			\psdots[linecolor=black, dotsize=0.4](3.5,1.78)
			\psdots[linecolor=black, dotsize=0.4](18.7,1.78)
			\psline[linecolor=black, linewidth=0.08](4.7,2.18)(4.7,1.38)(4.7,1.38)
			\psline[linecolor=black, linewidth=0.08](17.5,2.18)(17.5,1.38)(17.5,1.38)
			\psdots[linecolor=black, fillstyle=solid, dotstyle=o, dotsize=0.4, fillcolor=white](4.3,1.78)
			\psdots[linecolor=black, fillstyle=solid, dotstyle=o, dotsize=0.4, fillcolor=white](17.9,1.78)
			\psdots[linecolor=black, dotsize=0.4](2.7,0.98)
			\psdots[linecolor=black, dotsize=0.4](18.7,0.98)
			\psdots[linecolor=black, dotsize=0.4](17.9,0.98)
			\psdots[linecolor=black, fillstyle=solid, dotstyle=o, dotsize=0.4, fillcolor=white](17.1,0.98)
			\psdots[linecolor=black, fillstyle=solid, dotstyle=o, dotsize=0.4, fillcolor=white](3.5,0.98)
			\psline[linecolor=black, linewidth=0.08](4.3,1.38)(4.3,0.58)(4.3,0.58)
			\psline[linecolor=black, linewidth=0.08](16.7,1.38)(16.7,0.58)(16.7,0.58)
			\psline[linecolor=black, linewidth=0.08](6.3,0.58)(6.3,-0.22)(6.3,-0.22)
			\psline[linecolor=black, linewidth=0.08](6.3,-0.22)(6.3,-1.02)(6.3,-1.02)
			\psline[linecolor=black, linewidth=0.08](2.3,-2.62)(19.1,-2.62)(19.1,-2.62)
			\psline[linecolor=black, linewidth=0.08](2.3,-3.42)(19.1,-3.42)(19.1,-3.42)
			\psline[linecolor=black, linewidth=0.08](2.3,-4.22)(19.1,-4.22)(19.1,-4.22)
			\psline[linecolor=black, linewidth=0.08](2.3,-4.22)(2.3,-7.42)(2.3,-7.42)
			\psline[linecolor=black, linewidth=0.08](19.1,-4.22)(19.1,-8.62)(19.1,-8.62)
			\psdots[linecolor=black, fillstyle=solid, dotstyle=o, dotsize=0.4, fillcolor=white](5.1,2.58)
			\psdots[linecolor=black, dotsize=0.4](2.7,-1.42)
			\psdots[linecolor=black, dotsize=0.4](3.5,-1.42)
			\psdots[linecolor=black, dotsize=0.4](4.3,-1.42)
			\psdots[linecolor=black, dotsize=0.4](2.7,-2.22)
			\psdots[linecolor=black, dotsize=0.4](3.5,-2.22)
			\psdots[linecolor=black, dotsize=0.4](2.7,-3.02)
			\psdots[linecolor=black, dotsize=0.4](18.7,-1.42)
			\psdots[linecolor=black, dotsize=0.4](18.7,-2.22)
			\psdots[linecolor=black, dotsize=0.4](17.9,-2.22)
			\psdots[linecolor=black, dotsize=0.4](18.7,-3.02)
			\psdots[linecolor=black, dotsize=0.4](17.9,-3.02)
			\psdots[linecolor=black, dotsize=0.4](17.1,-3.02)
			\psdots[linecolor=black, fillstyle=solid, dotstyle=o, dotsize=0.4, fillcolor=white](18.7,-0.62)
			\psdots[linecolor=black, fillstyle=solid, dotstyle=o, dotsize=0.4, fillcolor=white](17.9,-1.42)
			\psdots[linecolor=black, fillstyle=solid, dotstyle=o, dotsize=0.4, fillcolor=white](17.1,-2.22)
			\psdots[linecolor=black, fillstyle=solid, dotstyle=o, dotsize=0.4, fillcolor=white](16.3,-3.02)
			\psline[linecolor=black, linewidth=0.08](18.3,-0.22)(18.3,-1.02)(18.3,-1.02)
			\psline[linecolor=black, linewidth=0.08](17.5,-1.02)(17.5,-1.82)(17.5,-1.82)
			\psline[linecolor=black, linewidth=0.08](16.7,-1.82)(16.7,-2.62)(16.7,-2.62)
			\psline[linecolor=black, linewidth=0.08](15.9,-2.62)(15.9,-3.42)(15.9,-3.42)
			\psline[linecolor=black, linewidth=0.08](5.5,-1.02)(5.5,-1.82)(5.1,-1.82)
			\psline[linecolor=black, linewidth=0.08](4.7,-1.82)(4.7,-2.62)(4.7,-2.62)
			\psline[linecolor=black, linewidth=0.08](3.9,-2.62)(3.9,-3.42)(3.9,-3.42)
			\psdots[linecolor=black, fillstyle=solid, dotstyle=o, dotsize=0.4, fillcolor=white](5.1,-1.42)
			\psdots[linecolor=black, fillstyle=solid, dotstyle=o, dotsize=0.4, fillcolor=white](4.3,-2.22)
			\psdots[linecolor=black, fillstyle=solid, dotstyle=o, dotsize=0.4, fillcolor=white](3.5,-3.02)
			\psline[linecolor=black, linewidth=0.08](2.3,-6.62)(19.1,-6.62)(19.1,-6.62)
			\psline[linecolor=black, linewidth=0.08](7.5,-5.82)(2.3,-5.82)(2.3,-5.82)
			\psline[linecolor=black, linewidth=0.08](14.3,-5.82)(19.1,-5.82)(19.1,-5.82)
			\psdots[linecolor=black, fillstyle=solid, dotstyle=o, dotsize=0.4, fillcolor=white](18.7,-6.22)
			\psdots[linecolor=black, fillstyle=solid, dotstyle=o, dotsize=0.4, fillcolor=white](17.9,-7.02)
			\psdots[linecolor=black, fillstyle=solid, dotstyle=o, dotsize=0.4, fillcolor=white](17.1,-7.82)
			\psdots[linecolor=black, fillstyle=solid, dotstyle=o, dotsize=0.4, fillcolor=white](16.3,-8.62)
			\psline[linecolor=black, linewidth=0.08](2.3,-7.42)(19.1,-7.42)(19.1,-7.42)
			\psline[linecolor=black, linewidth=0.08](19.1,-8.22)(2.3,-8.22)(2.3,-7.42)(2.3,-7.42)
			\psline[linecolor=black, linewidth=0.08](19.1,-8.62)(19.1,-9.02)(2.3,-9.02)(2.3,-8.22)(2.7,-8.22)
			\psdots[linecolor=black, dotsize=0.4](3.5,-7.02)
			\psdots[linecolor=black, dotsize=0.4](4.3,-7.02)
			\psdots[linecolor=black, dotsize=0.4](5.1,-7.02)
			\psdots[linecolor=black, dotsize=0.4](2.7,-7.02)
			\psdots[linecolor=black, dotsize=0.4](3.5,-7.82)
			\psdots[linecolor=black, dotsize=0.4](4.3,-7.82)
			\psdots[linecolor=black, dotsize=0.4](5.1,-7.82)
			\psdots[linecolor=black, dotsize=0.4](2.7,-7.82)
			\psdots[linecolor=black, dotsize=0.4](3.5,-8.62)
			\psdots[linecolor=black, dotsize=0.4](4.3,-8.62)
			\psdots[linecolor=black, dotsize=0.4](5.1,-8.62)
			\psdots[linecolor=black, dotsize=0.4](2.7,-8.62)
			\psdots[linecolor=black, dotsize=0.4](9.1,-6.22)
			\psdots[linecolor=black, dotsize=0.4](8.3,-6.22)
			\psdots[linecolor=black, fillstyle=solid, dotstyle=o, dotsize=0.4, fillcolor=white](9.9,-7.02)
			\psdots[linecolor=black, fillstyle=solid, dotstyle=o, dotsize=0.4, fillcolor=white](9.1,-7.82)
			\psdots[linecolor=black, fillstyle=solid, dotstyle=o, dotsize=0.4, fillcolor=white](8.3,-8.62)
			\psdots[linecolor=black, dotsize=0.1](10.7,-9.42)
			\psdots[linecolor=black, dotsize=0.1](10.7,-9.82)
			\psdots[linecolor=black, dotsize=0.1](10.7,-10.22)
			\psline[linecolor=black, linewidth=0.08](2.3,-9.02)(2.3,-10.62)(19.1,-10.62)(19.1,-8.62)(19.1,-8.62)
			\psline[linecolor=black, linewidth=0.08](7.1,-12.22)(19.1,-12.22)(19.1,-10.62)(19.1,-11.42)(14.3,-11.42)(13.9,-11.42)
			\psdots[linecolor=black, dotsize=0.4](2.7,-11.02)
			\psdots[linecolor=black, dotsize=0.4](18.7,-11.02)
			\psdots[linecolor=black, dotsize=0.4](17.9,-11.02)
			\psdots[linecolor=black, dotsize=0.1](15.9,-11.02)
			\psdots[linecolor=black, dotsize=0.4](15.5,-11.02)
			\psdots[linecolor=black, dotsize=0.1](16.3,-11.02)
			\psdots[linecolor=black, dotsize=0.4](14.7,-11.02)
			\psdots[linecolor=black, fillstyle=solid, dotstyle=o, dotsize=0.4, fillcolor=white](13.9,-11.02)
			\psline[linecolor=black, linewidth=0.08](15.9,-8.22)(15.9,-9.02)(15.9,-9.02)
			\psline[linecolor=black, linewidth=0.08](10.7,-6.62)(10.7,-7.42)(9.9,-7.42)(9.9,-8.22)(9.1,-8.22)(9.1,-9.02)(9.1,-9.02)
			\psline[linecolor=black, linewidth=0.08](18.3,-5.82)(18.3,-6.62)(17.9,-6.62)(17.5,-6.62)(17.5,-7.42)(16.7,-7.42)(16.7,-8.22)(16.7,-8.22)
			\psline[linecolor=black, linewidth=0.08](13.5,-10.62)(13.5,-11.42)(13.5,-11.42)
			\psline[linecolor=black, linewidth=0.08](3.9,-11.42)(3.9,-10.62)(3.9,-10.62)
			\psdots[linecolor=black, fillstyle=solid, dotstyle=o, dotsize=0.4, fillcolor=white](3.5,-11.02)
			\psdots[linecolor=black, dotsize=0.4](18.7,-7.02)
			\psdots[linecolor=black, dotsize=0.4](17.9,-7.82)
			\psdots[linecolor=black, dotsize=0.4](18.7,-7.82)
			\psdots[linecolor=black, dotsize=0.4](17.1,-8.62)
			\psdots[linecolor=black, dotsize=0.4](17.9,-8.62)
			\psdots[linecolor=black, dotsize=0.4](18.7,-8.62)
			\rput[bl](1.68,0.08){9}
			\rput[bl](1.56,-0.72){10}
			\rput[bl](1.58,-1.52){11}
			\rput[bl](1.52,-2.3){12}
			\rput[bl](1.5,-3.06){13}
			\rput[bl](1.48,-3.9){14}
			\rput[bl](0.0,-12.12){$\frac{(i+2)(i+1)}{2}-1$}
			\rput[bl](0.0,-11.3){$\frac{(i+2)(i+1)}{2}-2$}
			\rput[bl](1.16,-6.52){$\frac{i(i+1)}{2}$}
			\rput[bl](0.6,-7.24){$\frac{i(i+1)}{2}+1$}
			\rput[bl](0.56,-8.06){$\frac{i(i+1)}{2}+2$}
			\rput[bl](0.54,-8.78){$\frac{i(i+1)}{2}+3$}
			\psline[linecolor=black, linewidth=0.08](7.1,-3.42)(7.1,-4.22)(7.1,-4.22)
			\psdots[linecolor=black, dotsize=0.4](9.1,-7.02)
			\psdots[linecolor=black, dotsize=0.4](8.3,-7.82)
			\psdots[linecolor=black, dotsize=0.4](8.3,-7.02)
			\psdots[linecolor=black, dotsize=0.4](7.5,-8.62)
			\psdots[linecolor=black, dotsize=0.4](7.5,-7.82)
			\psdots[linecolor=black, dotsize=0.4](7.5,-7.02)
			\psdots[linecolor=black, dotsize=0.4](7.5,-6.22)
			\psdots[linecolor=black, dotsize=0.1](5.9,-6.22)
			\psdots[linecolor=black, dotsize=0.1](6.3,-6.22)
			\psdots[linecolor=black, dotsize=0.1](6.7,-6.22)
			\psdots[linecolor=black, dotsize=0.1](5.9,-7.02)
			\psdots[linecolor=black, dotsize=0.1](6.3,-7.02)
			\psdots[linecolor=black, dotsize=0.1](6.7,-7.02)
			\psdots[linecolor=black, dotsize=0.1](5.9,-7.82)
			\psdots[linecolor=black, dotsize=0.1](6.3,-7.82)
			\psdots[linecolor=black, dotsize=0.1](6.7,-7.82)
			\psdots[linecolor=black, dotsize=0.1](5.9,-8.62)
			\psdots[linecolor=black, dotsize=0.1](6.3,-8.62)
			\psdots[linecolor=black, dotsize=0.1](6.7,-8.62)
			\psdots[linecolor=black, dotsize=0.1](5.9,-11.82)
			\psdots[linecolor=black, dotsize=0.1](6.3,-11.82)
			\psdots[linecolor=black, dotsize=0.1](6.7,-11.82)
			\psdots[linecolor=black, dotsize=0.4](7.5,-11.82)
			\psdots[linecolor=black, dotsize=0.4](8.3,-11.82)
			\psdots[linecolor=black, dotsize=0.4](9.1,-11.82)
			\psdots[linecolor=black, dotsize=0.4](9.9,-11.82)
			\psdots[linecolor=black, dotsize=0.4](10.7,-11.82)
			\psdots[linecolor=black, fillstyle=solid, dotstyle=o, dotsize=0.4, fillcolor=white](11.5,-11.82)
			\psdots[linecolor=black, dotsize=0.4](17.1,-11.02)
			\psdots[linecolor=black, dotsize=0.1](16.7,-11.02)
			\psdots[linecolor=black, dotsize=0.1](14.7,8.98)
			\psdots[linecolor=black, dotsize=0.1](15.1,8.98)
			\psdots[linecolor=black, dotsize=0.1](15.5,8.98)
			\psdots[linecolor=black, dotsize=0.1](13.1,8.98)
			\psdots[linecolor=black, dotsize=0.1](12.7,8.98)
			\psdots[linecolor=black, dotsize=0.1](12.3,8.98)
			\psdots[linecolor=black, dotsize=0.1](8.7,8.98)
			\psdots[linecolor=black, dotsize=0.1](8.3,8.98)
			\psdots[linecolor=black, dotsize=0.1](7.9,8.98)
			\psdots[linecolor=black, dotsize=0.4](13.9,8.98)
			\psline[linecolor=black, linewidth=0.08](13.5,8.98)(14.3,8.98)(13.9,8.98)(13.9,8.98)
			\rput[bl](13.36,9.3){$v_{n-i+1}$}
			\psline[linecolor=black, linewidth=0.08](17.5,4.58)(17.5,3.78)(17.5,3.78)
			\psdots[linecolor=black, dotstyle=o, dotsize=0.4, fillcolor=white](17.9,4.18)
			\end{pspicture}
		}
	\end{center}
	\caption{\footnotesize Making isolate dominating sets of $C_n$ such that at least one of the vertices $v_1$ and $v_2$ are in $S$, related to Theorem \ref{cycle-thm}} \label{cycle}
\end{figure}

\vspace{1.5cm}
\begin{figure}
	\begin{center}
		\psscalebox{0.450 0.450}
		{			\begin{pspicture}(0,-12.27)(19.183334,9.69)
			\psdots[linecolor=black, dotsize=0.4](2.7333333,8.98)
			\psdots[linecolor=black, dotsize=0.4](3.5333333,8.98)
			\psdots[linecolor=black, dotsize=0.4](4.3333335,8.98)
			\psdots[linecolor=black, dotsize=0.4](5.133333,8.98)
			\psdots[linecolor=black, dotsize=0.4](5.9333334,8.98)
			\psdots[linecolor=black, dotsize=0.4](6.733333,8.98)
			\psline[linecolor=black, linewidth=0.08](2.3333333,6.98)(7.5333333,6.98)(7.5333333,6.98)
			\psline[linecolor=black, linewidth=0.08](2.3333333,6.18)(2.3333333,6.18)(7.5333333,6.18)(7.5333333,6.18)
			\psline[linecolor=black, linewidth=0.08](2.3333333,5.38)(7.5333333,5.38)(7.5333333,5.38)
			\psline[linecolor=black, linewidth=0.08](2.3333333,4.58)(7.5333333,4.58)(7.5333333,4.58)
			\psdots[linecolor=black, fillstyle=solid, dotstyle=o, dotsize=0.4, fillcolor=white](3.5333333,6.58)
			\psdots[linecolor=black, fillstyle=solid, dotstyle=o, dotsize=0.4, fillcolor=white](2.7333333,5.78)
			\psline[linecolor=black, linewidth=0.08](2.3333333,0.58)(7.5333333,0.58)(7.5333333,0.58)
			\psdots[linecolor=black, fillstyle=solid, dotstyle=o, dotsize=0.4, fillcolor=white](2.7333333,3.38)
			\psdots[linecolor=black, dotsize=0.4](4.3333335,3.38)
			\psdots[linecolor=black, dotsize=0.4](4.3333335,2.58)
			\psdots[linecolor=black, fillstyle=solid, dotstyle=o, dotsize=0.4, fillcolor=white](2.7333333,0.18)
			\psline[linecolor=black, linewidth=0.08](2.3333333,-1.02)(7.5333333,-1.02)(7.5333333,-1.02)
			\psline[linecolor=black, linewidth=0.08](2.3333333,-0.22)(7.5333333,-0.22)(7.5333333,-0.22)
			\psdots[linecolor=black, dotsize=0.4](4.3333335,0.18)
			\psdots[linecolor=black, dotsize=0.4](4.3333335,-0.62)
			\psline[linecolor=black, linewidth=0.08](6.733333,-1.82)(7.5333333,-1.82)(7.5333333,-1.82)
			\psline[linecolor=black, linewidth=0.08](2.3333333,-10.62)(2.3333333,-11.42)(7.5333333,-11.42)(7.5333333,-11.42)
			\psdots[linecolor=black, dotsize=0.4](4.3333335,-11.82)
			\psdots[linecolor=black, dotsize=0.4](5.133333,-11.82)
			\psline[linecolor=black, linewidth=0.08](2.3333333,-11.42)(2.3333333,-12.22)(7.5333333,-12.22)(7.5333333,-12.22)
			\psdots[linecolor=black, dotsize=0.4](17.933332,8.98)
			\psdots[linecolor=black, dotsize=0.4](18.733334,8.98)
			\psline[linecolor=black, linewidth=0.08](7.133333,-11.42)(14.333333,-11.42)(14.333333,-11.42)
			\psline[linecolor=black, linewidth=0.08](7.5333333,-5.82)(14.333333,-5.82)(14.333333,-5.82)
			\psline[linecolor=black, linewidth=0.08](7.5333333,-1.82)(14.333333,-1.82)(14.333333,-1.82)
			\psline[linecolor=black, linewidth=0.08](7.5333333,-1.02)(14.333333,-1.02)(14.333333,-1.02)
			\psline[linecolor=black, linewidth=0.08](7.5333333,-0.22)(14.333333,-0.22)(14.333333,-0.22)
			\psline[linecolor=black, linewidth=0.08](7.5333333,0.58)(14.333333,0.58)(14.333333,0.58)
			\psline[linecolor=black, linewidth=0.08](7.5333333,4.58)(14.333333,4.58)(14.333333,4.58)
			\psline[linecolor=black, linewidth=0.08](7.5333333,5.38)(14.333333,5.38)(14.333333,5.38)
			\psline[linecolor=black, linewidth=0.08](7.133333,6.18)(14.333333,6.18)(14.333333,6.18)
			\rput[bl](2.5533333,9.38){${v_1}$}
			\rput[bl](3.3733332,9.4){${v_2}$}
			\rput[bl](4.113333,9.4){${v_3}$}
			\rput[bl](4.9333334,9.42){${v_4}$}
			\rput[bl](5.7933335,9.44){${v_5}$}
			\rput[bl](6.5733333,9.42){${v_6}$}
			\rput[bl](18.593334,9.26){${v_n}$}
			\rput[bl](17.613333,9.22){${v_{n-1}}$}
			\psdots[linecolor=black, fillstyle=solid, dotstyle=o, dotsize=0.4, fillcolor=white](2.7333333,-11.82)
			\rput[bl](1.7533333,6.52){1}
			\rput[bl](1.7133334,5.7){2}
			\rput[bl](1.6933333,4.9){3}
			\rput[bl](1.6733333,4.08){4}
			\rput[bl](1.6933333,3.26){5}
			\rput[bl](1.6933333,2.44){6}
			\psline[linecolor=black, linewidth=0.06](2.7333333,8.98)(7.133333,8.98)(7.133333,8.98)
			\psline[linecolor=black, linewidth=0.06](17.533333,8.98)(18.733334,8.98)(18.733334,8.98)
			\psdots[linecolor=black, dotsize=0.4](11.533334,8.98)
			\psdots[linecolor=black, dotsize=0.4](10.733334,8.98)
			\psdots[linecolor=black, dotsize=0.4](9.933333,8.98)
			\psline[linecolor=black, linewidth=0.06](9.533334,8.98)(11.933333,8.98)(11.933333,8.98)
			\rput[bl](10.573334,9.32){${v_i}$}
			\rput[bl](9.653334,9.32){${v_{i-1}}$}
			\rput[bl](11.253333,9.3){${v_{i+1}}$}
			\psline[linecolor=black, linewidth=0.08](6.733333,-1.82)(2.3333333,-1.82)(2.3333333,-1.82)
			\psline[linecolor=black, linewidth=0.08](2.3333333,2.18)(14.333333,2.18)(14.333333,2.18)
			\psline[linecolor=black, linewidth=0.08](2.3333333,2.98)(14.333333,2.98)(14.333333,2.98)
			\psline[linecolor=black, linewidth=0.08](2.3333333,3.78)(14.333333,3.78)(14.333333,3.78)
			\psline[linecolor=black, linewidth=0.08](7.5333333,6.98)(14.333333,6.98)(13.933333,6.98)
			\psdots[linecolor=black, dotsize=0.4](17.133333,8.98)
			\psdots[linecolor=black, dotsize=0.4](16.333334,8.98)
			\psline[linecolor=black, linewidth=0.08](15.933333,8.98)(17.933332,8.98)(17.533333,8.98)
			\psline[linecolor=black, linewidth=0.08](2.7333333,8.98)(2.7333333,7.78)(18.733334,7.78)(18.733334,8.98)(18.333334,8.98)
			\rput[bl](16.693333,9.24){$v_{n-2}$}
			\rput[bl](15.833333,9.22){$v_{n-3}$}
			\psline[linecolor=black, linewidth=0.08](14.333333,6.98)(19.133333,6.98)(19.133333,-4.62)(19.133333,-4.62)
			\psline[linecolor=black, linewidth=0.08](2.3333333,-4.22)(2.3333333,6.98)(2.3333333,6.98)
			\psline[linecolor=black, linewidth=0.08](14.333333,6.18)(19.133333,6.18)(19.133333,6.18)
			\psline[linecolor=black, linewidth=0.08](13.933333,5.38)(19.133333,5.38)(19.133333,5.38)
			\psline[linecolor=black, linewidth=0.08](13.933333,4.58)(19.133333,4.58)(19.133333,4.58)
			\psline[linecolor=black, linewidth=0.08](14.333333,3.78)(19.133333,3.78)(19.133333,3.78)
			\psline[linecolor=black, linewidth=0.08](14.333333,2.98)(19.133333,2.98)(18.733334,2.98)
			\psline[linecolor=black, linewidth=0.08](13.933333,2.18)(19.133333,2.18)(19.133333,2.18)
			\psline[linecolor=black, linewidth=0.08](14.333333,0.58)(19.133333,0.58)(18.733334,0.58)
			\psline[linecolor=black, linewidth=0.08](14.333333,-0.22)(18.733334,-0.22)(19.133333,-0.22)(19.133333,-0.22)
			\psline[linecolor=black, linewidth=0.08](14.333333,-1.02)(19.133333,-1.02)(19.133333,-1.02)
			\psline[linecolor=black, linewidth=0.08](14.333333,-1.82)(19.133333,-1.82)(19.133333,-1.82)
			\psdots[linecolor=black, fillstyle=solid, dotstyle=o, dotsize=0.4, fillcolor=white](3.5333333,4.18)
			\psdots[linecolor=black, dotsize=0.4](18.733334,4.18)
			\psline[linecolor=black, linewidth=0.08](2.3333333,-2.62)(19.133333,-2.62)(19.133333,-2.62)
			\psline[linecolor=black, linewidth=0.08](2.3333333,-5.02)(19.133333,-5.02)(19.133333,-5.02)
			\psline[linecolor=black, linewidth=0.08](2.3333333,-4.22)(19.133333,-4.22)(19.133333,-4.22)
			\psline[linecolor=black, linewidth=0.08](2.3333333,-4.22)(2.3333333,-7.42)(2.3333333,-7.42)
			\psline[linecolor=black, linewidth=0.08](19.133333,-4.22)(19.133333,-8.62)(19.133333,-8.62)
			\psdots[linecolor=black, dotsize=0.4](4.3333335,-1.42)
			\psdots[linecolor=black, dotsize=0.4](18.733334,-1.42)
			\psdots[linecolor=black, dotsize=0.4](18.733334,-2.22)
			\psline[linecolor=black, linewidth=0.08](2.3333333,-6.62)(19.133333,-6.62)(19.133333,-6.62)
			\psline[linecolor=black, linewidth=0.08](7.5333333,-5.82)(2.3333333,-5.82)(2.3333333,-5.82)
			\psline[linecolor=black, linewidth=0.08](14.333333,-5.82)(19.133333,-5.82)(19.133333,-5.82)
			\psline[linecolor=black, linewidth=0.08](2.3333333,-7.42)(19.133333,-7.42)(19.133333,-7.42)
			\psline[linecolor=black, linewidth=0.08](7.133333,-12.22)(19.133333,-12.22)(19.133333,-10.62)(19.133333,-11.42)(14.333333,-11.42)(13.933333,-11.42)
			\psdots[linecolor=black, dotsize=0.4](18.733334,-7.02)
			\psdots[linecolor=black, dotsize=0.1](14.733334,8.98)
			\psdots[linecolor=black, dotsize=0.1](15.133333,8.98)
			\psdots[linecolor=black, dotsize=0.1](15.533334,8.98)
			\psdots[linecolor=black, dotsize=0.1](13.133333,8.98)
			\psdots[linecolor=black, dotsize=0.1](12.733334,8.98)
			\psdots[linecolor=black, dotsize=0.1](12.333333,8.98)
			\psdots[linecolor=black, dotsize=0.1](8.333333,8.98)
			\psdots[linecolor=black, dotsize=0.1](7.9333334,8.98)
			\psdots[linecolor=black, dotsize=0.1](7.5333333,8.98)
			\psdots[linecolor=black, dotsize=0.4](13.933333,8.98)
			\psline[linecolor=black, linewidth=0.08](13.533334,8.98)(14.333333,8.98)(13.933333,8.98)(13.933333,8.98)
			\psline[linecolor=black, linewidth=0.08](9.533334,8.98)(11.933333,8.98)(11.933333,8.98)
			\psline[linecolor=black, linewidth=0.08](15.933333,8.98)(18.733334,8.98)(18.733334,8.98)
			\psdots[linecolor=black, dotsize=0.4](4.3333335,6.58)
			\psdots[linecolor=black, dotsize=0.4](4.3333335,5.78)
			\psdots[linecolor=black, dotsize=0.4](4.3333335,4.98)
			\psdots[linecolor=black, dotsize=0.4](4.3333335,4.18)
			\psdots[linecolor=black, dotsize=0.4](18.733334,6.58)
			\psdots[linecolor=black, dotsize=0.4](18.733334,5.78)
			\psdots[linecolor=black, dotsize=0.4](18.733334,4.98)
			\psdots[linecolor=black, dotsize=0.4](18.733334,3.38)
			\psdots[linecolor=black, dotsize=0.4](18.733334,2.58)
			\psdots[linecolor=black, dotsize=0.4](18.733334,0.18)
			\psdots[linecolor=black, dotsize=0.4](18.733334,-0.62)
			\psdots[linecolor=black, fillstyle=solid, dotstyle=o, dotsize=0.4, fillcolor=white](2.7333333,6.58)
			\psdots[linecolor=black, fillstyle=solid, dotstyle=o, dotsize=0.4, fillcolor=white](3.5333333,5.78)
			\psdots[linecolor=black, fillstyle=solid, dotstyle=o, dotsize=0.4, fillcolor=white](2.7333333,4.98)
			\psdots[linecolor=black, fillstyle=solid, dotstyle=o, dotsize=0.4, fillcolor=white](3.5333333,4.98)
			\psdots[linecolor=black, fillstyle=solid, dotstyle=o, dotsize=0.4, fillcolor=white](2.7333333,4.18)
			\psdots[linecolor=black, fillstyle=solid, dotstyle=o, dotsize=0.4, fillcolor=white](3.5333333,3.38)
			\psdots[linecolor=black, fillstyle=solid, dotstyle=o, dotsize=0.4, fillcolor=white](2.7333333,2.58)
			\psdots[linecolor=black, fillstyle=solid, dotstyle=o, dotsize=0.4, fillcolor=white](3.5333333,2.58)
			\psdots[linecolor=black, fillstyle=solid, dotstyle=o, dotsize=0.4, fillcolor=white](3.5333333,0.18)
			\psdots[linecolor=black, fillstyle=solid, dotstyle=o, dotsize=0.4, fillcolor=white](2.7333333,-0.62)
			\psdots[linecolor=black, fillstyle=solid, dotstyle=o, dotsize=0.4, fillcolor=white](3.5333333,-0.62)
			\psdots[linecolor=black, fillstyle=solid, dotstyle=o, dotsize=0.4, fillcolor=white](2.7333333,-1.42)
			\psdots[linecolor=black, fillstyle=solid, dotstyle=o, dotsize=0.4, fillcolor=white](3.5333333,-1.42)
			\psdots[linecolor=black, fillstyle=solid, dotstyle=o, dotsize=0.4, fillcolor=white](2.7333333,-2.22)
			\psdots[linecolor=black, fillstyle=solid, dotstyle=o, dotsize=0.4, fillcolor=white](3.5333333,-2.22)
			\psdots[linecolor=black, dotsize=0.4](4.3333335,-2.22)
			\psdots[linecolor=black, fillstyle=solid, dotstyle=o, dotsize=0.4, fillcolor=white](5.133333,6.58)
			\psdots[linecolor=black, fillstyle=solid, dotstyle=o, dotsize=0.4, fillcolor=white](17.933332,6.58)
			\psdots[linecolor=black, fillstyle=solid, dotstyle=o, dotsize=0.4, fillcolor=white](5.133333,5.78)
			\psdots[linecolor=black, fillstyle=solid, dotstyle=o, dotsize=0.4, fillcolor=white](17.933332,4.98)
			\psdots[linecolor=black, dotsize=0.4](17.933332,5.78)
			\psdots[linecolor=black, dotsize=0.4](5.133333,4.98)
			\psdots[linecolor=black, dotsize=0.4](17.933332,4.18)
			\psdots[linecolor=black, dotsize=0.4](5.133333,4.18)
			\psline[linecolor=black, linewidth=0.08](5.5333333,6.98)(5.5333333,6.18)(17.533333,6.18)(17.533333,6.18)
			\psline[linecolor=black, linewidth=0.08](17.533333,6.98)(17.533333,6.18)(17.533333,6.18)
			\psline[linecolor=black, linewidth=0.08](5.5333333,6.18)(5.5333333,5.38)(5.5333333,5.38)(5.5333333,5.38)
			\psline[linecolor=black, linewidth=0.08](17.533333,6.18)(17.533333,5.38)(17.533333,5.38)
			\psline[linecolor=black, linewidth=0.08](5.5333333,5.38)(5.5333333,4.58)(5.5333333,4.58)
			\psline[linecolor=black, linewidth=0.08](17.533333,5.38)(17.533333,4.58)(17.533333,4.58)
			\psdots[linecolor=black, fillstyle=solid, dotstyle=o, dotsize=0.4, fillcolor=white](5.9333334,4.18)
			\psdots[linecolor=black, fillstyle=solid, dotstyle=o, dotsize=0.4, fillcolor=white](17.133333,4.18)
			\psdots[linecolor=black, fillstyle=solid, dotstyle=o, dotsize=0.4, fillcolor=white](17.133333,3.38)
			\psdots[linecolor=black, fillstyle=solid, dotstyle=o, dotsize=0.4, fillcolor=white](17.133333,2.58)
			\psdots[linecolor=black, dotsize=0.4](17.933332,3.38)
			\psdots[linecolor=black, dotsize=0.4](17.933332,2.58)
			\psdots[linecolor=black, dotsize=0.4](5.133333,3.38)
			\psdots[linecolor=black, dotsize=0.4](5.9333334,3.38)
			\psdots[linecolor=black, dotsize=0.4](5.133333,2.58)
			\psdots[linecolor=black, dotsize=0.4](5.9333334,2.58)
			\psdots[linecolor=black, dotsize=0.4](6.733333,2.58)
			\psdots[linecolor=black, dotsize=0.1](10.733334,1.78)
			\psdots[linecolor=black, dotsize=0.1](10.733334,1.38)
			\psdots[linecolor=black, dotsize=0.1](10.733334,0.98)
			\psdots[linecolor=black, dotsize=0.4](17.933332,0.18)
			\psdots[linecolor=black, dotsize=0.4](5.133333,0.18)
			\psline[linecolor=black, linewidth=0.08](2.7333333,8.98)(7.133333,8.98)(7.133333,8.98)
			\psline[linecolor=black, linewidth=0.08](8.733334,8.98)(9.533334,8.98)(9.533334,8.98)
			\psdots[linecolor=black, dotsize=0.4](9.133333,8.98)
			\psdots[linecolor=black, fillstyle=solid, dotstyle=o, dotsize=0.4, fillcolor=white](6.733333,3.38)
			\psdots[linecolor=black, fillstyle=solid, dotstyle=o, dotsize=0.4, fillcolor=white](7.5333333,2.58)
			\psdots[linecolor=black, fillstyle=solid, dotstyle=o, dotsize=0.4, fillcolor=white](17.133333,0.18)
			\psline[linecolor=black, linewidth=0.08](16.733334,4.58)(16.733334,2.18)(16.733334,2.18)
			\psline[linecolor=black, linewidth=0.08](16.733334,0.58)(16.733334,-0.22)(16.733334,-0.22)
			\psline[linecolor=black, linewidth=0.08](11.133333,0.58)(11.133333,-0.22)(11.133333,-0.22)
			\psline[linecolor=black, linewidth=0.08](7.9333334,2.18)(7.9333334,2.98)(7.133333,2.98)(7.133333,3.78)(6.3333335,3.78)(6.3333335,4.58)(6.3333335,4.58)
			\psdots[linecolor=black, fillstyle=solid, dotstyle=o, dotsize=0.4, fillcolor=white](10.733334,0.18)
			\psdots[linecolor=black, dotsize=0.4](9.933333,0.18)
			\psdots[linecolor=black, dotsize=0.4](5.9333334,0.18)
			\psdots[linecolor=black, dotsize=0.1](7.5333333,0.18)
			\psdots[linecolor=black, dotsize=0.1](7.133333,0.18)
			\psdots[linecolor=black, dotsize=0.1](7.9333334,0.18)
			\psdots[linecolor=black, dotsize=0.4](17.933332,-0.62)
			\psdots[linecolor=black, dotsize=0.4](17.133333,-0.62)
			\psdots[linecolor=black, dotsize=0.4](17.933332,-1.42)
			\psdots[linecolor=black, dotsize=0.4](17.133333,-1.42)
			\psdots[linecolor=black, dotsize=0.4](17.933332,-2.22)
			\psdots[linecolor=black, dotsize=0.4](17.133333,-2.22)
			\psdots[linecolor=black, fillstyle=solid, dotstyle=o, dotsize=0.4, fillcolor=white](16.333334,-0.62)
			\psdots[linecolor=black, fillstyle=solid, dotstyle=o, dotsize=0.4, fillcolor=white](16.333334,-1.42)
			\psdots[linecolor=black, fillstyle=solid, dotstyle=o, dotsize=0.4, fillcolor=white](16.333334,-2.22)
			\psline[linecolor=black, linewidth=0.08](15.933333,-0.22)(15.933333,-2.62)(15.933333,-2.62)
			\psdots[linecolor=black, dotsize=0.1](10.733334,-3.02)
			\psdots[linecolor=black, dotsize=0.1](10.733334,-3.42)
			\psdots[linecolor=black, dotsize=0.1](10.733334,-3.82)
			\psdots[linecolor=black, dotsize=0.4](9.133333,-4.62)
			\psdots[linecolor=black, dotsize=0.4](17.133333,-4.62)
			\psdots[linecolor=black, dotsize=0.4](17.933332,-4.62)
			\psdots[linecolor=black, dotsize=0.4](18.733334,-4.62)
			\psdots[linecolor=black, fillstyle=solid, dotstyle=o, dotsize=0.4, fillcolor=white](16.333334,-4.62)
			\psline[linecolor=black, linewidth=0.08](15.933333,-4.22)(15.933333,-5.02)(15.933333,-5.02)
			\psline[linecolor=black, linewidth=0.08](10.333333,-4.22)(10.333333,-5.02)(10.333333,-5.02)
			\psline[linecolor=black, linewidth=0.08](6.3333335,-0.22)(6.3333335,-1.02)(7.133333,-1.02)(7.133333,-1.82)(7.9333334,-1.82)(7.9333334,-2.62)(7.9333334,-2.62)
			\psdots[linecolor=black, dotsize=0.4](5.133333,-0.62)
			\psdots[linecolor=black, dotsize=0.4](5.133333,-1.42)
			\psdots[linecolor=black, dotsize=0.4](5.9333334,-1.42)
			\psdots[linecolor=black, dotsize=0.4](5.133333,-2.22)
			\psdots[linecolor=black, dotsize=0.4](5.9333334,-2.22)
			\psdots[linecolor=black, dotsize=0.4](6.733333,-2.22)
			\psdots[linecolor=black, dotsize=0.4](4.3333335,-4.62)
			\psdots[linecolor=black, dotsize=0.4](5.133333,-4.62)
			\psdots[linecolor=black, fillstyle=solid, dotstyle=o, dotsize=0.4, fillcolor=white](3.5333333,-4.62)
			\psdots[linecolor=black, fillstyle=solid, dotstyle=o, dotsize=0.4, fillcolor=white](2.7333333,-4.62)
			\psdots[linecolor=black, fillstyle=solid, dotstyle=o, dotsize=0.4, fillcolor=white](9.933333,-4.62)
			\psdots[linecolor=black, dotsize=0.1](6.733333,-4.62)
			\psdots[linecolor=black, dotsize=0.1](7.133333,-4.62)
			\psdots[linecolor=black, dotsize=0.1](7.5333333,-4.62)
			\psdots[linecolor=black, dotsize=0.4](5.9333334,-4.62)
			\psdots[linecolor=black, fillstyle=solid, dotstyle=o, dotsize=0.4, fillcolor=white](7.5333333,-2.22)
			\psdots[linecolor=black, fillstyle=solid, dotstyle=o, dotsize=0.4, fillcolor=white](6.733333,-1.42)
			\psdots[linecolor=black, fillstyle=solid, dotstyle=o, dotsize=0.4, fillcolor=white](5.9333334,-0.62)
			\psdots[linecolor=black, fillstyle=solid, dotstyle=o, dotsize=0.4, fillcolor=white](3.5333333,-11.82)
			\psdots[linecolor=black, dotsize=0.4](18.733334,-5.42)
			\psdots[linecolor=black, dotsize=0.4](17.933332,-5.42)
			\psdots[linecolor=black, dotsize=0.4](17.133333,-5.42)
			\psdots[linecolor=black, dotsize=0.4](16.333334,-5.42)
			\psdots[linecolor=black, dotsize=0.4](18.733334,-6.22)
			\psdots[linecolor=black, dotsize=0.4](17.933332,-6.22)
			\psdots[linecolor=black, dotsize=0.4](17.133333,-6.22)
			\psdots[linecolor=black, dotsize=0.4](16.333334,-6.22)
			\psdots[linecolor=black, dotsize=0.4](17.933332,-7.02)
			\psdots[linecolor=black, dotsize=0.4](17.133333,-7.02)
			\psdots[linecolor=black, dotsize=0.4](16.333334,-7.02)
			\psdots[linecolor=black, fillstyle=solid, dotstyle=o, dotsize=0.4, fillcolor=white](15.533334,-5.42)
			\psdots[linecolor=black, fillstyle=solid, dotstyle=o, dotsize=0.4, fillcolor=white](15.533334,-6.22)
			\psdots[linecolor=black, fillstyle=solid, dotstyle=o, dotsize=0.4, fillcolor=white](15.533334,-7.02)
			\psline[linecolor=black, linewidth=0.08](2.3333333,-9.42)(2.3333333,-7.02)(2.3333333,-7.02)
			\psline[linecolor=black, linewidth=0.08](19.133333,-9.02)(2.3333333,-9.02)(2.3333333,-9.02)
			\psline[linecolor=black, linewidth=0.08](2.3333333,-9.82)(19.133333,-9.82)(19.133333,-9.82)
			\psline[linecolor=black, linewidth=0.08](2.3333333,-9.02)(2.3333333,-10.62)(2.3333333,-11.42)(2.3333333,-11.42)
			\psline[linecolor=black, linewidth=0.08](19.133333,-8.22)(19.133333,-10.62)(19.133333,-10.62)
			\psdots[linecolor=black, dotsize=0.1](4.3333335,-10.22)
			\psdots[linecolor=black, dotsize=0.1](4.3333335,-10.62)
			\psdots[linecolor=black, dotsize=0.1](4.3333335,-11.02)
			\psdots[linecolor=black, dotsize=0.4](18.733334,-9.42)
			\psdots[linecolor=black, dotsize=0.4](17.933332,-9.42)
			\psdots[linecolor=black, dotsize=0.4](17.133333,-9.42)
			\psdots[linecolor=black, dotsize=0.4](16.333334,-9.42)
			\psdots[linecolor=black, dotsize=0.4](18.733334,-11.82)
			\psdots[linecolor=black, dotsize=0.4](17.933332,-11.82)
			\psdots[linecolor=black, dotsize=0.4](17.133333,-11.82)
			\psdots[linecolor=black, dotsize=0.1](15.933333,-11.82)
			\psdots[linecolor=black, dotsize=0.1](15.533334,-11.82)
			\psdots[linecolor=black, dotsize=0.1](15.133333,-11.82)
			\psdots[linecolor=black, dotsize=0.4](13.933333,-11.82)
			\psdots[linecolor=black, dotsize=0.4](14.733334,-11.82)
			\psdots[linecolor=black, dotsize=0.4](16.333334,-11.82)
			\psdots[linecolor=black, fillstyle=solid, dotstyle=o, dotsize=0.4, fillcolor=white](13.133333,-11.82)
			\psline[linecolor=black, linewidth=0.08](6.3333335,-11.42)(6.3333335,-12.22)(6.3333335,-12.22)
			\psline[linecolor=black, linewidth=0.08](12.733334,-11.42)(12.733334,-12.22)(12.733334,-12.22)
			\psdots[linecolor=black, fillstyle=solid, dotstyle=o, dotsize=0.4, fillcolor=white](5.9333334,-11.82)
			\psdots[linecolor=black, fillstyle=solid, dotstyle=o, dotsize=0.4, fillcolor=white](15.533334,-9.42)
			\psdots[linecolor=black, fillstyle=solid, dotstyle=o, dotsize=0.4, fillcolor=white](9.133333,-9.42)
			\psdots[linecolor=black, dotsize=0.4](8.333333,-9.42)
			\psdots[linecolor=black, dotsize=0.1](7.133333,-9.42)
			\psdots[linecolor=black, dotsize=0.1](6.733333,-9.42)
			\psdots[linecolor=black, dotsize=0.1](7.5333333,-9.42)
			\psdots[linecolor=black, dotsize=0.4](5.9333334,-9.42)
			\psdots[linecolor=black, dotsize=0.4](5.133333,-9.42)
			\psdots[linecolor=black, dotsize=0.4](4.3333335,-9.42)
			\psdots[linecolor=black, fillstyle=solid, dotstyle=o, dotsize=0.4, fillcolor=white](3.5333333,-9.42)
			\psdots[linecolor=black, fillstyle=solid, dotstyle=o, dotsize=0.4, fillcolor=white](2.7333333,-9.42)
			\psdots[linecolor=black, fillstyle=solid, dotstyle=o, dotsize=0.4, fillcolor=white](3.5333333,-7.02)
			\psdots[linecolor=black, fillstyle=solid, dotstyle=o, dotsize=0.4, fillcolor=white](2.7333333,-7.02)
			\psdots[linecolor=black, fillstyle=solid, dotstyle=o, dotsize=0.4, fillcolor=white](3.5333333,-6.22)
			\psdots[linecolor=black, fillstyle=solid, dotstyle=o, dotsize=0.4, fillcolor=white](2.7333333,-6.22)
			\psdots[linecolor=black, fillstyle=solid, dotstyle=o, dotsize=0.4, fillcolor=white](2.7333333,-5.42)
			\psdots[linecolor=black, fillstyle=solid, dotstyle=o, dotsize=0.4, fillcolor=white](3.5333333,-5.42)
			\psdots[linecolor=black, fillstyle=solid, dotstyle=o, dotsize=0.4, fillcolor=white](5.9333334,-5.42)
			\psdots[linecolor=black, fillstyle=solid, dotstyle=o, dotsize=0.4, fillcolor=white](6.733333,-6.22)
			\psdots[linecolor=black, fillstyle=solid, dotstyle=o, dotsize=0.4, fillcolor=white](7.5333333,-7.02)
			\psdots[linecolor=black, dotsize=0.4](4.3333335,-5.42)
			\psdots[linecolor=black, dotsize=0.4](5.133333,-5.42)
			\psdots[linecolor=black, dotsize=0.4](4.3333335,-6.22)
			\psdots[linecolor=black, dotsize=0.4](5.133333,-6.22)
			\psdots[linecolor=black, dotsize=0.4](5.9333334,-6.22)
			\psdots[linecolor=black, dotsize=0.4](4.3333335,-7.02)
			\psdots[linecolor=black, dotsize=0.4](5.133333,-7.02)
			\psdots[linecolor=black, dotsize=0.4](5.9333334,-7.02)
			\psdots[linecolor=black, dotsize=0.4](6.733333,-7.02)
			\psdots[linecolor=black, dotsize=0.1](10.733334,-7.82)
			\psdots[linecolor=black, dotsize=0.1](10.733334,-8.22)
			\psdots[linecolor=black, dotsize=0.1](10.733334,-8.62)
			\psline[linecolor=black, linewidth=0.08](6.3333335,-5.02)(6.3333335,-5.82)(7.133333,-5.82)(7.133333,-6.62)(7.9333334,-6.62)(7.9333334,-7.42)(7.9333334,-7.42)
			\psline[linecolor=black, linewidth=0.08](9.533334,-9.02)(9.533334,-9.82)(9.533334,-9.82)
			\psline[linecolor=black, linewidth=0.08](15.133333,-5.02)(15.133333,-7.42)(15.133333,-7.42)
			\psline[linecolor=black, linewidth=0.08](15.133333,-9.02)(15.133333,-9.82)(15.133333,-9.82)
			\rput[bl](8.786667,9.326667){$v_{i-2}$}
			\rput[bl](13.373333,9.246667){$v_{n-i+3}$}
			\rput[bl](1.5333333,0.033333335){i-1}
			\rput[bl](1.7333333,-0.7266667){i}
			\rput[bl](1.3333334,-1.6066667){i+1}
			\rput[bl](1.3199999,-2.3533332){i+2}
			\rput[bl](0.0,-12.073334){$\frac{(i-4)(i-5)}{2}+3$}
			\rput[bl](1.4,-4.7266665){2i-6}
			\rput[bl](1.4133333,-5.5){2i-5}
			\rput[bl](1.4,-6.34){2i-4}
			\rput[bl](1.4,-7.14){2i-3}
			\rput[bl](1.3066666,-9.513333){3i-12}
			\psdots[linecolor=black, dotsize=0.4](9.133333,0.18)
			\psdots[linecolor=black, dotsize=0.4](8.333333,-4.62)
			\end{pspicture}
		}
	\end{center}
	\caption{\footnotesize Making isolate dominating sets of $C_n$ such that $v_1,v_2 \not\in S$, related to the proof of Theorem \ref{cycle-thm}} \label{cycle2}
\end{figure}

Here, we consider the number of isolate dominating sets of cycle $C_n$. 

\begin{theorem}\label{cycle-thm}
	The number of isolate dominating sets of Cycle $C_n$ with cardinality $i$, $d_0(C_n,i)$, is: 
	\begin{align*}
	d_0(C_n,i) & =   2d(P_{n-3},i-1)+d(P_{n-6},i-2)+2d(P_{n-6},i-3) \\
	&\quad +\displaystyle\sum_{k=3}^{i}k d_0(P_{n-k-1},i-k+1)+\displaystyle\sum_{k=4}^{i-1}(k-3) d_0(P_{n-k-4},i-k).
	\end{align*}
\end{theorem}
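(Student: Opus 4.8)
\medskip
\noindent The plan is to run, for $C_n$, the same kind of ``first-block'' analysis used for $P_n$ in the proof of Theorem~\ref{path-thm}, after cutting the cycle open. Write $V(C_n)=\{v_1,\dots,v_n\}$ with $v_j$ adjacent to $v_{j\pm 1}$ (subscripts modulo $n$), and let $S$ be an isolate dominating set of $C_n$ with $|S|=i$. Since $C_n[S]$ has an isolated vertex we have $S\neq V(C_n)$, and in fact at least two vertices lie outside $S$. The argument splits according to whether $S$ meets $\{v_1,v_2\}$: the case $\{v_1,v_2\}\cap S\neq\emptyset$ is organised along the lines of Figure~\ref{cycle}, and the case $v_1,v_2\notin S$ along the lines of Figure~\ref{cycle2}.

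\emph{Case $v_1,v_2\notin S$.} Here $v_1$ can be dominated only by $v_n$ and $v_2$ only by $v_3$, so $v_3,v_n\in S$ are forced and $S$ is contained in the induced path $v_3v_4\cdots v_n\cong P_{n-2}$. One checks that $S$ dominates $C_n$ exactly when $S$ dominates this $P_{n-2}$, and that a vertex $v_j\in S$ is isolated in $C_n[S]$ exactly when it is isolated in $P_{n-2}[S]$, because the only edges of $C_n$ leaving $\{v_3,\dots,v_n\}$ are incident with $v_1$ or $v_2$, which are not in $S$. So this case counts isolate dominating sets of $P_{n-2}$ of size $i$ that contain both endpoints. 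To evaluate that, I would look simultaneously at the maximal run of $S$ beginning at $v_3$, of length $a\ge 1$, and the maximal run of $S$ ending at $v_n$, of length $b\ge 1$; these two runs are disjoint. If $a=1$ then $v_3$ is already isolated and $v_5\cdots v_n$ only has to be dominated; symmetrically if $a\ge 2$ and $b=1$ then $v_n$ is isolated and only $v_3\cdots v_{n-2}$ has to be dominated; and if $a,b\ge 2$ then the isolated vertex must lie strictly between the two runs, so the interior $v_{a+4}\cdots v_{n-b-1}$ must itself be an isolate dominating set. Grouping by $a+b$: the subcases with $a,b\ge 2$ and $a+b=k$ each contribute $d_0(P_{n-k-4},i-k)$ and there are $k-3$ of them, giving $\sum_{k=4}^{i-1}(k-3)\,d_0(P_{n-k-4},i-k)$; the subcase $a=b=1$ contributes $d(P_{n-6},i-2)$; and the remaining subcases (exactly one of the two end-runs of length $1$) should collapse to the term $2d(P_{n-6},i-3)$.

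\emph{Case $\{v_1,v_2\}\cap S\neq\emptyset$.} Let $B$ be the maximal run of consecutive vertices of $S$ containing $v_1$ if $v_1\in S$, and containing $v_2$ otherwise (then $v_2\in S$); this run may wrap past $v_n$, and this choice is canonical, so no configuration is counted twice. Put $|B|=k-1$ and delete from $C_n$ the $k-1$ vertices of $B$ together with the two non-$S$ vertices adjacent to the ends of $B$; what remains is an induced path on $n-(k-1)-2=n-k-1$ vertices, every vertex of which must be dominated by $S$ within that path, and $C_n[S]$ has an isolated vertex outside $B$ precisely when the induced subgraph on that path does. If $k-1=1$, the single vertex of $B$ is itself isolated, so the remaining $P_{n-3}$ only has to be dominated; there are exactly two admissible positions for such a $B$, namely $B=\{v_1\}$ (with $v_n,v_2\notin S$, remainder $v_3\cdots v_{n-1}$) and $B=\{v_2\}$ (with $v_1,v_3\notin S$, remainder $v_4\cdots v_n$), giving $2d(P_{n-3},i-1)$. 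If $k-1\ge 2$, a run of that length meeting $\{v_1,v_2\}$ has exactly $k$ admissible positions ($v_1$ may be its $1$st, $2$nd, \dots, or $(k-1)$th vertex, or $B$ is the run containing $v_2$ but not $v_1$), and for each of them the remainder must be an isolate dominating set of $P_{n-k-1}$ of size $i-(k-1)=i-k+1$; summing over $k$ gives $\sum_{k=3}^{i}k\,d_0(P_{n-k-1},i-k+1)$. Adding the two cases yields the stated identity.

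The ideas are exactly those of Theorem~\ref{path-thm}; the real work is the bookkeeping. One must check that each of the two large case analyses genuinely partitions the isolate dominating sets of $C_n$ (in particular that the wrap-around run $B$ is singled out unambiguously, and that in the case $v_1,v_2\notin S$ the configurations in which both ends of $P_{n-2}$ supply an isolated vertex are counted once), that each subcase reduces to the correct $d(\cdot)$ or $d_0(\cdot)$ by the same mechanism already used for $P_n$, and then that the $O(i^2)$ resulting terms telescope into the displayed closed form; the terminal small paths need the usual conventions $d(P_0,0)=1$ and $d(P_m,t)=0$ for $t<0$. I expect keeping the $v_1,v_2\notin S$ subcases mutually disjoint and collapsing the two double sums to be the main obstacles.
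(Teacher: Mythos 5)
Your proposal follows essentially the same route as the paper's own proof: the same split into the cases $\{v_1,v_2\}\cap S\neq\emptyset$ (parameterized by the maximal run through $v_1$ or $v_2$, with $k$ admissible positions for a run of length $k-1$, yielding $2d(P_{n-3},i-1)+\sum_{k=3}^{i}k\,d_0(P_{n-k-1},i-k+1)$) and $v_1,v_2\notin S$ (forcing $v_3,v_n\in S$ and parameterizing by the two end-runs, yielding the remaining terms), with each subcase reduced to a $d$- or $d_0$-count on a shorter path exactly as in the paper. Your organization by run lengths is just a more systematic packaging of the paper's case-by-case enumeration, and the bookkeeping you flag as remaining is the same bookkeeping the paper dispatches with ``by continuing these steps.''
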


\begin{proof}
	Let $V(C_n)=\{v_1,v_2,....,v_n\}$  and  $S$ be an isolate  dominating  set of $C_n$ with cardinality $i$. First we consider dominating sets of $C_n$ such that at least one of the vertices $v_1$ and $v_2$ are in $S$ (see Figure \ref{cycle}). We have following cases: 
	
	\medskip			
	\noindent {\bf Case (1)} 
	If $v_1\in S$ and $v_2,v_n\not\in S$, then ${\cal D}_0(C_n,i)={\cal D}(P_{n-3},i-1)\cup \{v_1\}$. In this case the number of isolate dominating sets of $C_n$ with cardinality $i$ is $d(P_{n-3},i-1)$.  
	
	\medskip			
	\noindent {\bf Case (2)} 
	If $v_1\not\in S, v_2\in S$ and $v_3\not\in S$, then ${\cal D}_0(C_n,i)={\cal D}(P_{n-3},i-1)\cup \{v_2\}$. In this case the number of isolate dominating sets of $C_n$ with cardinality $i$ is $d(P_{n-3},i-1)$.  
	
	\medskip			
	\noindent {\bf Case (3)} 
	If $v_1,v_2\in S,$ and $v_3,v_n\not\in S$, then ${\cal D}_0(C_n,i)={\cal D}_0(P_{n-4},i-2)\cup \{v_1,v_2\}$. In this case the number of isolate dominating sets of $C_n$ with cardinality $i$ is $d_0(P_{n-4},i-2)$. 
	
	\medskip			
	\noindent {\bf Case (4)} 
	If $v_1,v_n\in S,$ and $v_2,v_{n-1}\not\in S$, then ${\cal D}_0(C_n,i)={\cal D}_0(P_{n-4},i-2)\cup \{v_1,v_n\}$. In this case the number of isolate dominating sets of $C_n$ with cardinality $i$ is $d_0(P_{n-4},i-2)$. 		
	
	\medskip			
	\noindent {\bf Case (5)} 
	If $v_2,v_3\in S,$ and $v_1,v_4\not\in S$, then ${\cal D}_0(C_n,i)={\cal D}_0(P_{n-4},i-2)\cup \{v_2,v_3\}$. In this case the number of isolate dominating sets of $C_n$ with cardinality $i$ is $d_0(P_{n-4},i-2)$. 
	
	\medskip			
	\noindent {\bf Case (6)} 
	If $v_1,v_2,v_3\in S$ and $v_4,v_n\not\in S$, then ${\cal D}_0(C_n,i)={\cal D}_0(P_{n-5},i-3)\cup \{v_1,v_2,v_3\}$. In this case $d_0(C_n,i)$ is $d_0(P_{n-5},i-3)$. 
	
	\medskip			
	\noindent {\bf Case (7)} 
	If $v_1,v_2,v_n\in S$ and $v_3,v_{n-1}\not\in S$, then ${\cal D}_0(C_n,i)={\cal D}_0(P_{n-5},i-3)\cup \{v_1,v_2,v_3\}$. In this case the number of isolate dominating sets of $C_n$ with cardinality $i$ is $d_0(P_{n-5},i-3)$. 
	
	\medskip			
	\noindent {\bf Case (8)} 
	If $v_1,v_{n-1},v_n\in S$ and $v_2,v_{n-2}\not\in S$, then ${\cal D}_0(C_n,i)={\cal D}_0(P_{n-5},i-3)\cup \{v_1,v_2,v_3\}$. In this case the number of isolate dominating sets of $C_n$ with cardinality $i$ is $d_0(P_{n-5},i-3)$. 
	
	\medskip
	\noindent {\bf Case (9)}
	If $v_2,v_3,v_4\in S,$ and $v_1,v_5\not\in S$, then ${\cal D}_0(C_n,i)={\cal D}_0(P_{n-5},i-3)\cup \{v_1,v_2,v_3\}$. In this case $d_0(C_n,i)$ is $d_0(P_{n-5},i-3)$. 												
	
	\medskip
	By continuing these steps we will have the following $i+1$ end steps: 
	
	\medskip
	\noindent {\bf  Case ($\frac{i(i+1)}{2}$)}
	If $v_1,v_2,v_3,\ldots,v_{i-1}\in S$ and $v_i,v_n\not\in S$, then
	$${\cal D}_0(C_n,i)={\cal D}_0(P_{n-i-1},1)\cup \{v_1,v_2,v_3,\ldots,v_{i-1}\}.$$
	In this case the number of isolate dominating sets of $C_n$ with cardinality $i$ is $d_0(P_{n-i},1)$. 	
	
	\medskip
	\noindent {\bf Case ($\frac{i(i+1)}{2}+1$)}
	If $v_n,v_1,v_2,v_3,\ldots,v_{i-2}\in S,$ and $v_i,v_{n-1}\not\in S$, then 
	$${\cal D}_0(C_n,i)={\cal D}_0(P_{n-i-1},1)\cup \{v_n,v_1,v_2,v_3,\ldots,v_{i-2}\}.$$
	In this case the number of isolate dominating sets of $C_n$ with cardinality $i$ is $d_0(P_{n-i-1},1)$. 
	
	\medskip
	\noindent {\bf Case($\frac{(i+2)(i+1)}{2}-2$)}
	If $v_{n-i+2},v_{n-i+3},\ldots,v_n,v_1\in S,$ and $v_2,v_{n-i+1}\not\in S$, then 
	$${\cal D}_0(C_n,i)={\cal D}_0(P_{n-i-1},1)\cup \{v_{n-i+2},v_{n-i+3},\ldots,v_n,v_1\}.$$
	In this case the number of isolate dominating sets of $C_n$ with cardinality $i$ is $d_0(P_{n-i-1},1)$.
	\medskip	
	\noindent {\bf Case ($\frac{(i+2)(i+1)}{2}-1$)} 
	If $v_2,v_3,v_4,\ldots,v_i\in S,$ and $v_1,v_{i+1}\not\in S$, then 
	$${\cal D}_0(C_n,i)={\cal D}_0(P_{n-i-1},1)\cup \{v_2,v_3,v_4,\ldots,v_i\}.$$
	In this case the number of isolate dominating sets of $C_n$ with cardinality $i$ is $d_0(P_{n-i-1},1)$. 			

	Now we consider dominating sets of $C_n$ such that $v_1,v_2 \not\in S$ (see Figure \ref{cycle2}). Therefore we should have $v_3,v_n \in S$. So we have following cases:

	\medskip			
	\noindent {\bf Case ($1^\prime$)} 
	If $v_3,v_n\in S$ and $v_4,v_{n-1}\not\in S$, then ${\cal D}_0(C_n,i)={\cal D}(P_{n-6},i-2)\cup \{v_3,v_n\}$. In this case the number of isolate dominating sets of $C_n$ with cardinality $i$ is $d(P_{n-6},i-2)$.  
	
	\medskip			
	\noindent {\bf Case ($2'$)} 
	If $v_3,v_{n-1},v_n\in S$ and $v_4\not\in S$, then ${\cal D}_0(C_n,i)={\cal D}(P_{n-6},i-3)\cup \{v_3,v_{n-1},v_n\}$. In this case the number of isolate dominating sets of $C_n$ with cardinality $i$ is $d(P_{n-6},i-3)$.  
	
	\medskip			
	\noindent {\bf Case ($3'$)} 
	If $v_3,v_4,v_n\in S$ and $v_{n-1}\not\in S$, then ${\cal D}_0(C_n,i)={\cal D}(P_{n-6},i-3)\cup \{v_3,v_4,v_n\}$. In this case the number of isolate dominating sets of $C_n$ with cardinality $i$ is $d(P_{n-6},i-3)$.  
	
	\medskip			
	\noindent {\bf Case ($4'$)} 
	If $v_3,v_4,v_{n-1},v_n\in S$ and $v_5,v_{n-2}\not\in S$, then ${\cal D}_0(C_n,i)={\cal D}_0(P_{n-8},i-4)\cup \{v_3,v_4,v_{n-1},v_n\}$. In this case the number of isolate dominating sets of $C_n$ with cardinality $i$ is $d_0(P_{n-8},i-4)$. 		
	
	\medskip			
	\noindent {\bf Case ($5'$)} 
	If $v_3,v_4,v_5,v_{n-1},v_n\in S$ and $v_6,v_{n-2}\not\in S$, then ${\cal D}_0(C_n,i)={\cal D}_0(P_{n-9},i-5)\cup \{v_3,v_4,v_5,v_{n-1},v_n\}$. In this case the number of isolate dominating sets of $C_n$ with cardinality $i$ is $d_0(P_{n-9},i-5)$. 
	
	\medskip			
	\noindent {\bf Case ($6'$)} 
	If $v_3,v_4,v_5,v_6,v_{n-1},v_n\in $ and $v_7,v_{n-2}\not\in S$, then ${\cal D}_0(C_n,i)={\cal D}_0(P_{n-10},i-6)\cup \{v_3,v_4,v_5,v_6,v_{n-1},v_n\}$. In this case the number of isolate dominating sets of $C_n$ with cardinality $i$ is $d_0(P_{n-10},i-6)$. 
	
	\medskip			
	\noindent {\bf Case ($i-1'$)} 
	If $v_3,v_4,v_5,\ldots,v_{i-1},v_{n-1},v_n\in S $ and $v_i,v_{n-2}\not\in S$, then ${\cal D}_0(C_n,i)={\cal D}_0(P_{n-i-3},1)\cup \{v_3,v_4,v_5,\ldots,v_{i-1},v_{n-1},v_n\}$. In this case the number of isolate dominating sets of $C_n$ with cardinality $i$ is $d_0(P_{n-i-3},1)$. 
	
	\medskip			
	\noindent {\bf Case ($i'$)} 
	If $v_3,v_4,v_{n-1},v_{n-2},v_n\in S$ and $v_5,v_{n-3}\not\in S$, then ${\cal D}_0(C_n,i)={\cal D}_0(P_{n-9},i-5)\cup \{v_3,v_4,v_{n-1},v_{n-2},v_n\}$. In this case the number of isolate dominating sets of $C_n$ with cardinality $i$ is $d_0(P_{n-9},i-5)$. 		
	
	\medskip			
	\noindent {\bf Case ($i+1'$)} 
	If $v_3,v_4,v_5,v_{n-2},v_{n-1},v_n\in S$ and $v_6,v_{n-3}\not\in S$, then ${\cal D}_0(C_n,i)={\cal D}_0(P_{n-10},i-6)\cup \{v_3,v_4,v_5,v_{n-2},v_{n-1},v_n\}$. In this case the number of isolate dominating sets of $C_n$ with cardinality $i$ is $d_0(P_{n-10},i-6)$. 
	
	\medskip			
	\noindent {\bf Case ($i+2'$)} 
	If $v_3,v_4,v_5,v_6,v_{n-2},v_{n-1},v_n\in $ and $v_7,v_{n-3}\not\in S$, then ${\cal D}_0(C_n,i)={\cal D}_0(P_{n-11},i-7)\cup \{v_3,v_4,v_5,v_6,v_{n-2},v_{n-1},v_n\}$. In this case the number of isolate dominating sets of $C_n$ with cardinality $i$ is $d_0(P_{n-11},i-7)$.

	\medskip			
	\noindent {\bf Case ($2i-6'$)} 
	If $v_3,v_4,v_5,\ldots,v_{i-2},v_{n-1},v_n\in S $ and $v_{i-1},v_{n-3}\not\in S$, then ${\cal D}_0(C_n,i)={\cal D}_0(P_{n-i-3},1)\cup \{v_3,v_4,v_5,\ldots,v_{i-2},v_{n-1},v_n\}$. In this case the number of isolate dominating sets of $C_n$ with cardinality $i$ is $d_0(P_{n-i-3},1)$. 											
	
	\medskip
	By continuing these steps we will have the following end step: 
	
	\medskip
	\noindent {\bf  Case ($\frac{(i-4)(i-5)}{2}+3'$)}
If $v_3,v_4,v_{n-i+3},v_{n-i+4}\ldots,v_{n-1},v_n\in S $ and $v_5,v_{n-i+2}\not\in S$, then ${\cal D}_0(C_n,i)={\cal D}_0(P_{n-i-3},1)\cup \{v_3,v_4,v_{n-i+3},v_{n-i+4}\ldots,v_{n-1},v_n\}$. In this case the number of isolate dominating sets of $C_n$ with cardinality $i$ is $d_0(P_{n-i-3},1)$.

	\medskip
	So we have:
	\begin{align*}
	d_0(C_n,i) & =  d(P_{n-3},i-1) + d(P_{n-3},i-1) \\
	&\quad + d_0(P_{n-4},i-2) +d_0(P_{n-4},i-2) 
	+ d_0(P_{n-4},i-2)\\
	&\quad + d_0(P_{n-5},i-3)+d_0(P_{n-5},i-3)
	+ d_0(P_{n-5},i-3)+d_0(P_{n-5},i-3)\\
	&\quad  + \ldots  \\
	&\quad  + d_0(P_{n-i-1},1)+ d_0(P_{n-i-1},1)+\ldots + d_0(P_{n-i-1},1) \\
	&\quad  +d(P_{n-6},i-2)+d(P_{n-6},i-3)+d(P_{n-6},i-3)\\
	&\quad  +d_0(P_{n-8},i-4)+d_0(P_{n-9},i-5)+d_0(P_{n-10},i-6)+\ldots +d_0(P_{n-i-3},1)\\
	&\quad  +d_0(P_{n-9},i-5)+d_0(P_{n-10},i-6)+d_0(P_{n-11},i-7)+\ldots +d_0(P_{n-i-3},1)\\
	&\quad +\ldots \\
	&\quad +d_0(P_{n-i-3},1)\\
	&= 2d(P_{n-3},i-1)+d(P_{n-6},i-2)+2d(P_{n-6},i-3) \\
	&\quad +\displaystyle\sum_{k=3}^{i}k d_0(P_{n-k-1},i-k+1)+\displaystyle\sum_{k=4}^{i-1}(k-3) d_0(P_{n-k-4},i-k),
	\end{align*}
	and therefore we have the result.\qed	
\end{proof}

\section{The number of I.D. sets in corona product}

In this section we study the number of isolate dominating (I.D.) sets of corona product  of two graphs. First, we consider the centipede graph $P_n\circ K_1$. It is easy to see that $\gamma_0(P_n\circ K_1)=n$.

	\begin{theorem} \label{cendipate}
 	The number of isolate dominating sets of  $P_n\circ K_1$ with cardinality $n+i$, for $0 \leq i \leq  n-2 $, $d_0(P_n\circ K_1,i)$, is:
 	$$d_0(P_n\circ K_1,n+i)=\sum_{k=1}^{n-i} {n \choose i+k}{i+k \choose i}.$$
	\end{theorem}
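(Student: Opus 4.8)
The plan is to reduce the problem to a purely combinatorial count of state-assignments on $\{1,\dots,n\}$ and then evaluate that count so that it matches the stated sum. Write the path vertices of $P_n\circ K_1$ as $v_1,\dots,v_n$ (with $v_iv_{i+1}\in E$) and the attached pendants as $v_1',\dots,v_n'$, and for a set $S$ put $S_i=S\cap\{v_i,v_i'\}$. Since $v_i'$ has degree one with unique neighbour $v_i$, I would first record the basic fact that $S$ is a dominating set of $P_n\circ K_1$ if and only if $S_i\neq\emptyset$ for every $i$: the pendant $v_i'$ forces $S_i\neq\emptyset$, and conversely, if every $S_i\neq\emptyset$ then $v_i$ is dominated by $v_i'$ whenever $v_i\notin S$, and $v_i'$ is dominated by $v_i$ whenever $v_i'\notin S$. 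Hence a dominating set is exactly an assignment to each index $i$ of one of the three states $\{v_i\}$, $\{v_i'\}$, $\{v_i,v_i'\}$.

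Next I would translate the two remaining requirements into conditions on this assignment. If $c$ denotes the number of indices in state $\{v_i,v_i'\}$, then (since each of the other $n-c$ states contributes one vertex) $|S|=n+c$, so $|S|=n+i$ is equivalent to $c=i$. For the isolated-vertex requirement I would prove that, for $n\ge 2$ and $S$ dominating, $G[S]$ has an isolated vertex if and only if some index is in state $\{v_i'\}$: if $S_i=\{v_i'\}$ then $v_i'$ is isolated in $G[S]$ because $v_i\notin S$; conversely, given an isolated vertex $w$ of $G[S]$, if $w=v_i'$ then $v_i\notin S$ gives $S_i=\{v_i'\}$, while if $w=v_i$ then $v_i'\notin S$ and the path-neighbour(s) of $v_i$ lie outside $S$, so (using $n\ge 2$) some path-neighbour $v_j$ has $v_j\notin S$, whence $S$ being dominating forces $S_j=\{v_j'\}$.

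Combining these, for $n\ge 2$ the number $d_0(P_n\circ K_1,n+i)$ equals the number of assignments of states to $\{1,\dots,n\}$ with exactly $i$ indices in state $\{v_i,v_i'\}$ and at least one index in state $\{v_i'\}$. I would count these by grouping according to $j$, the number of indices whose state contains $v_i'$ (that is, the indices in state $\{v_i'\}$ or $\{v_i,v_i'\}$): there are $\binom{n}{j}$ ways to pick those $j$ indices, then $\binom{j}{i}$ ways to decide which $i$ of them also contain $v_i$, after which the remaining $j-i$ of them are in state $\{v_i'\}$ and the other $n-j$ indices are in state $\{v_i\}$; the constraint ``at least one state $\{v_i'\}$'' is precisely $j\ge i+1$. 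Writing $j=i+k$ gives $\sum_{k=1}^{n-i}\binom{n}{i+k}\binom{i+k}{i}$, which is the claimed formula. As an internal check one can simplify this to $\binom{n}{i}(2^{n-i}-1)$ using $\binom{n}{j}\binom{j}{i}=\binom{n}{i}\binom{n-i}{j-i}$ and the binomial theorem; the same number arises from choosing the $i$ indices in state $\{v_i,v_i'\}$ and then any non-constant $\{v_i\}/\{v_i'\}$-assignment to the rest.

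The only genuinely delicate point is the ``only if'' half of the isolated-vertex characterisation when the isolated vertex is a path vertex $v_i$: this is where the hypothesis $n\ge 2$ enters, and one must treat the endpoints $v_1$ and $v_n$ separately since each has a single path-neighbour. Everything else is routine bookkeeping. I would also remark that the argument in fact covers $i=n-1$ as well (it returns $n$), even though the statement only asserts $0\le i\le n-2$; the degenerate case $n=1$ is the only genuine exclusion, and there the asserted range of $i$ is already empty.
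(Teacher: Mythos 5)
Your proof is correct and follows essentially the same decomposition as the paper's: your parameter $j=i+k$ is the number of pendant vertices placed in $S$, $\binom{n}{i+k}$ chooses them, $\binom{i+k}{i}$ chooses which of their spine-neighbours are also included (the remaining spine vertices being forced into $S$ to dominate their own pendants), and $k\ge 1$ guarantees an isolated pendant. Your write-up is more complete only in that it explicitly proves the converse direction — that every isolate dominating set of cardinality $n+i$ arises from exactly one such state assignment — which the paper's proof leaves implicit.
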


	\begin{proof}
	Consider the graph $P_n\circ K_1$ in Figure \ref{cendipatethm}. To construct an isolate dominating set $S$ of $P_n\circ K_1$,   we choose $i+k$ ($0 \leq i \leq  n-2 $) leaves from the set of leaves of graph (such as  $u_{t_1},u_{t_2},u_{t_3},\ldots,u_{t_{i+k}}$, ($ 1\leq k \leq n-i $)), and $n-i-k$  vertices from the  set $\{v_1,v_2,\ldots,v_n\}$, which are not the neighbour of chosen leaves (such as $\{v_{t_{i+k+1}}, v_{t_{i+k+2}},...,v_{t_n}\}$). So the set $S$ is an isolate dominating set with cardinality $n$. To have isolate dominating set of cardinality $n+i$ ($0\leq i \leq n-2$),  we choose only $i$ vertices  from the neighbours of the chosen leaves (i.e. $u_{t_1},u_{t_2},u_{t_3},\ldots,u_{t_{i+k}}$). Since there is at least one vertex from  the set $\{u_{t_1},u_{t_2},u_{t_3},\ldots,u_{t_{i+k}}\}$ which is not in the dominating set, therefore we have the result. 
	\qed	
	\end{proof}

		\begin{figure}
		\begin{center}
		\psscalebox{0.8 0.8}
{
\begin{pspicture}(0,-3.385)(8.861389,-0.955)
\psline[linecolor=black, linewidth=0.08](0.20138885,-1.585)(4.201389,-1.585)(3.8013887,-1.585)
\psline[linecolor=black, linewidth=0.08](5.8013887,-1.585)(7.4013886,-1.585)(8.601389,-1.585)(8.601389,-1.585)
\psline[linecolor=black, linewidth=0.08](0.20138885,-1.585)(0.20138885,-2.785)(0.20138885,-2.785)
\psline[linecolor=black, linewidth=0.08](1.4013889,-1.585)(1.4013889,-2.785)(1.4013889,-2.785)
\psline[linecolor=black, linewidth=0.08](2.601389,-1.585)(2.601389,-2.785)(2.601389,-2.785)
\psline[linecolor=black, linewidth=0.08](3.8013887,-1.585)(3.8013887,-2.785)(3.8013887,-2.785)
\psline[linecolor=black, linewidth=0.08](6.201389,-1.585)(6.201389,-2.785)(6.201389,-2.785)
\psline[linecolor=black, linewidth=0.08](7.4013886,-1.585)(7.4013886,-2.785)(7.4013886,-2.785)
\psline[linecolor=black, linewidth=0.08](8.601389,-1.585)(8.601389,-2.785)(8.601389,-2.785)
\psdots[linecolor=black, dotstyle=o, dotsize=0.4, fillcolor=white](0.20138885,-1.585)
\psdots[linecolor=black, dotstyle=o, dotsize=0.4, fillcolor=white](1.4013889,-1.585)
\psdots[linecolor=black, dotstyle=o, dotsize=0.4, fillcolor=white](2.601389,-1.585)
\psdots[linecolor=black, dotstyle=o, dotsize=0.4, fillcolor=white](3.8013887,-1.585)
\psdots[linecolor=black, dotstyle=o, dotsize=0.4, fillcolor=white](6.201389,-1.585)
\psdots[linecolor=black, dotstyle=o, dotsize=0.4, fillcolor=white](7.4013886,-1.585)
\psdots[linecolor=black, dotstyle=o, dotsize=0.4, fillcolor=white](8.601389,-1.585)
\psdots[linecolor=black, dotstyle=o, dotsize=0.4, fillcolor=white](0.20138885,-2.785)
\psdots[linecolor=black, dotstyle=o, dotsize=0.4, fillcolor=white](1.4013889,-2.785)
\psdots[linecolor=black, dotstyle=o, dotsize=0.4, fillcolor=white](2.601389,-2.785)
\psdots[linecolor=black, dotstyle=o, dotsize=0.4, fillcolor=white](3.8013887,-2.785)
\psdots[linecolor=black, dotstyle=o, dotsize=0.4, fillcolor=white](6.201389,-2.785)
\psdots[linecolor=black, dotstyle=o, dotsize=0.4, fillcolor=white](7.4013886,-2.785)
\psdots[linecolor=black, dotstyle=o, dotsize=0.4, fillcolor=white](8.601389,-2.785)
\psdots[linecolor=black, dotsize=0.1](4.601389,-1.585)
\psdots[linecolor=black, dotsize=0.1](5.001389,-1.585)
\psdots[linecolor=black, dotsize=0.1](5.4013886,-1.585)
\rput[bl](0.0013888549,-1.225){$v_1$}
\rput[bl](1.1813889,-1.245){$v_2$}
\rput[bl](2.4213889,-1.225){$v_3$}
\rput[bl](3.581389,-1.265){$v_4$}
\rput[bl](5.8013887,-1.225){$v_{n-2}$}
\rput[bl](7.041389,-1.245){$v_{n-1}$}
\rput[bl](8.461389,-1.265){$v_n$}
\rput[bl](0.061388854,-3.385){$u_1$}
\rput[bl](1.2613889,-3.385){$u_2$}
\rput[bl](2.4213889,-3.365){$u_3$}
\rput[bl](3.601389,-3.365){$u_4$}
\rput[bl](8.381389,-3.325){$u_n$}
\rput[bl](5.8813887,-3.345){$u_{n-2}$}
\rput[bl](7.081389,-3.365){$u_{n-1}$}
\end{pspicture}
}
		\end{center}
		\caption{\small Graph $P_n\circ K_1$ related to the proof of Theorem \ref{cendipate}} \label{cendipatethm}
	\end{figure}
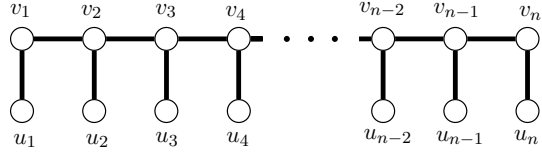	
	
	As an immediate result of the Theorem \ref{cendipate}, we have:

	\begin{corollary}
  For every $n\in \mathbb{N}$, $d_0(P_n\circ K_1,n)=2^n-1.$
 	 \end{corollary}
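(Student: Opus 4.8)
The plan is to obtain this as the special case $i=0$ of Theorem~\ref{cendipate}. Since $\gamma_0(P_n\circ K_1)=n$ and that theorem gives a closed form for $d_0(P_n\circ K_1,n+i)$ whenever $0\le i\le n-2$, setting $i=0$ produces exactly the count $d_0(P_n\circ K_1,n)$ of \emph{minimum} isolate dominating sets appearing in the corollary.

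First I would substitute $i=0$ into the formula
\[
d_0(P_n\circ K_1,n+i)=\sum_{k=1}^{n-i}{n \choose i+k}{i+k \choose i},
\]
which gives
\[
d_0(P_n\circ K_1,n)=\sum_{k=1}^{n}{n \choose k}{k \choose 0}=\sum_{k=1}^{n}{n \choose k},
\]
using ${k \choose 0}=1$ and the fact that the upper summation limit becomes $n-i=n$. Then I would finish by invoking the binomial identity $\sum_{k=0}^{n}{n \choose k}=2^{n}$ and subtracting the missing $k=0$ term ${n \choose 0}=1$, obtaining $2^{n}-1$.

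There is essentially no obstacle here: the corollary is a one-line consequence of Theorem~\ref{cendipate}, and the only point requiring care is the index bookkeeping, namely that the configuration absent from the full binomial expansion is precisely $k=0$. Conceptually this corresponds to the empty choice of leaves: a size-$n$ dominating set of $P_n\circ K_1$ containing no leaf must be $\{v_1,\dots,v_n\}$, whose induced subgraph is $P_n$ and hence (for $n\ge 2$) has no isolated vertex, so it fails to be an isolate dominating set. If a self-contained proof were preferred, I would instead argue directly that a size-$n$ dominating set must contain exactly one vertex from each pendant pair $\{u_j,v_j\}$; writing $T=\{j:u_j\in S\}$, every leaf $u_j$ with $j\in T$ is automatically isolated in $G[S]$, so $S$ is an isolate dominating set iff $T\neq\emptyset$, and there are $2^{n}-1$ nonempty subsets $T$. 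I expect the substitution route to be the shorter one and would present that, mentioning the combinatorial interpretation only in passing.
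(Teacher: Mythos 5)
Your proposal coincides with the paper's treatment: the corollary is stated there as an immediate consequence of Theorem \ref{cendipate}, obtained exactly as you do by setting $i=0$ and evaluating $\sum_{k=1}^{n}\binom{n}{k}=2^{n}-1$. Your additional observation that the missing $k=0$ term corresponds to the set $\{v_1,\dots,v_n\}$, which fails to be isolate for $n\ge 2$, is a correct refinement the paper does not spell out.
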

	
		\begin{theorem}\label{cen2n-1}
For every $n\in \mathbb{N}$,   $d_0(P_n\circ K_1,2n-1)=n.$
	\end{theorem}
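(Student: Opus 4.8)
The plan is to exploit the fact that $P_n\circ K_1$ has exactly $2n$ vertices, so every isolate dominating set of cardinality $2n-1$ is the complement of a single vertex. Write $V=V(P_n\circ K_1)=\{v_1,\dots,v_n\}\cup\{u_1,\dots,u_n\}$, where $u_j$ is the pendant leaf attached to $v_j$. There are exactly $2n$ subsets of size $2n-1$, namely $S_w:=V\setminus\{w\}$ for $w\in V$, and each is automatically a dominating set: the only vertex outside $S_w$ is $w$, and $w$ always has a neighbour in $S_w$ (if $w=u_j$ then $v_j\in S_w$; if $w=v_j$ then $u_j\in S_w$, and also the path-neighbours if they exist). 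Thus the whole question reduces to deciding, for each $w$, whether the induced subgraph $(P_n\circ K_1)[S_w]$ has an isolated vertex.

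First I would handle $w=v_j$. Removing $v_j$ leaves its pendant leaf $u_j$ with no neighbour inside $S_{v_j}$, since $v_j$ was its unique neighbour in the whole graph; hence $u_j$ is an isolated vertex of $(P_n\circ K_1)[S_{v_j}]$, and $S_{v_j}$ is an isolate dominating set. This already produces $n$ distinct isolate dominating sets of cardinality $2n-1$, one for each $j\in\{1,\dots,n\}$.

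Next I would show that, for $n\ge 2$, the remaining $n$ candidates $S_{u_j}$ are \emph{not} isolate dominating sets, by verifying that $(P_n\circ K_1)[S_{u_j}]$ has no isolated vertex. The set $S_{u_j}$ still contains every path vertex $v_1,\dots,v_n$ and every leaf except $u_j$. Any leaf $u_k$ with $k\ne j$ is adjacent to its support vertex $v_k\in S_{u_j}$; any path vertex $v_k$ with $k\ne j$ is adjacent to its own leaf $u_k\in S_{u_j}$; and $v_j$, whose leaf is now gone, still has a path-neighbour ($v_{j-1}$ or $v_{j+1}$) lying in $S_{u_j}$ precisely because $n\ge 2$. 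So no vertex of the induced subgraph is isolated. Combining the two cases, exactly $n$ of the $2n$ size-$(2n-1)$ subsets are isolate dominating sets, giving $d_0(P_n\circ K_1,2n-1)=n$; the degenerate case $n=1$, where $P_1\circ K_1=K_2$, can be checked directly.

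There is no genuinely hard step here; the only point requiring care is the use of the hypothesis $n\ge 2$ when arguing that $v_j$ is not isolated in $(P_n\circ K_1)[S_{u_j}]$ — that is the single place the argument would break down for $n=1$, and it is the step I would flag.
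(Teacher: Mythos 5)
Your proof is correct for $n\ge 2$ and follows essentially the same idea as the paper's: the isolate dominating sets of cardinality $2n-1$ are exactly the complements $V\setminus\{v_j\}$ of the support (path) vertices. In fact your write-up is more complete than the paper's, which only exhibits these $n$ sets (all leaves together with $n-1$ of the path vertices) and never verifies that the remaining $(2n-1)$-subsets, namely the complements $V\setminus\{u_j\}$ of the leaves, fail to be isolate dominating; you supply exactly that missing half of the count. One caution about the endpoint you flag: you defer the case $n=1$ to a direct check, but that check does not rescue the statement --- for $n=1$ one has $P_1\circ K_1=K_2$, and both singletons of $K_2$ are isolate dominating sets, so $d_0(K_2,1)=2\neq 1$. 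In other words, $V\setminus\{u_1\}=\{v_1\}$ is also an isolate dominating set when $n=1$, precisely because $v_1$ then has no path-neighbour, which is the step your argument (correctly) identifies as requiring $n\ge 2$. So the theorem as stated really needs the hypothesis $n\ge 2$; your proof pinpoints why, but you should record the outcome of the direct check rather than assert that it handles the degenerate case.
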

	
		\begin{proof}
	It suffices to put vertices $u_1,u_2,u_3,\ldots,u_n$ (see Figure \ref{cendipatethm}) in the dominating  set and choose $n-1$ vertices from $v_1,v_2,v_3,\ldots,v_n$ and put them into this set.
	\qed	
	\end{proof}
	
By similar argument to  Theorem \ref{cendipate}, we have the following result:	
	
		\begin{theorem}
 Let $G$ be a graph of order $n$. For every  $0 \leq i \leq n-2$,
 	$$d_0(G\circ K_1,n+i)=\sum_{k=1}^{n-i} {n \choose i+k}{i+k \choose i}.$$
	\end{theorem}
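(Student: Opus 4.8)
The plan is to mimic the counting argument of Theorem~\ref{cendipate}, observing that the structure of $G$ plays no role beyond fixing $n=|V(G)|$ (and that $G$, like $P_n$ for $n\ge 2$, has no isolated vertex). Write $V(G)=\{v_1,\dots,v_n\}$ and let $u_j$ be the leaf of $G\circ K_1$ attached to $v_j$. The single structural fact I would use is that every dominating set $S$ of $G\circ K_1$ must, for each $j$, contain $v_j$ or $u_j$, since $u_j$ has $v_j$ as its only neighbour. Classify the indices $j$ into three types according to $S$: type~A ($v_j\in S$, $u_j\notin S$), type~B ($v_j\notin S$, $u_j\in S$), type~C ($v_j,u_j\in S$); the fourth possibility $v_j,u_j\notin S$ is forbidden. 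Let $a,b,c$ count the indices of each type, so $a+b+c=n$ and $|S|=a+b+2c=n+c$. Thus $|S|=n+i$ forces $c=i$, and conversely any choice placing one of $v_j,u_j$ in $S$ for every $j$ with exactly $c=i$ indices of type~C is automatically a dominating set: an omitted $v_j$ (type~B) is dominated by $u_j$, and an omitted $u_j$ (type~A) is dominated by $v_j$.

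Next I would pin down when such an $S$ is an \emph{isolate} dominating set. A leaf $u_j\in S$ is isolated in $(G\circ K_1)[S]$ exactly when $v_j\notin S$, i.e.\ when $j$ is of type~B, so $b\ge 1$ already guarantees that $S$ is isolate dominating. If instead $b=0$, then $S\supseteq V(G)$ and the only leaves in $S$ are those of type~C, each adjacent to its support vertex; since $G$ has no isolated vertex, every $v_j$ has a neighbour inside $G\subseteq (G\circ K_1)[S]$, so $(G\circ K_1)[S]$ has no isolated vertex and $S$ is \emph{not} isolate dominating. Hence, among the sets with $c=i$, the isolate dominating ones are precisely those with $b\ge 1$.

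Finally I would count. Put $k=b$; then $k\ge 1$ and, since $a=n-i-k\ge 0$, also $k\le n-i$. Given $k$, the leaves lying in $S$ are exactly those with index of type~B or~C, a set of size $b+c=k+i$, which can be chosen in $\binom{n}{k+i}$ ways; among these $k+i$ indices I then choose which $i$ are of type~C (support vertex also in $S$) in $\binom{k+i}{i}$ ways, the remaining $k$ being of type~B; all $n-(k+i)$ other indices are forced to be of type~A. These independent choices recover $S$ uniquely, so $d_0(G\circ K_1,n+i)=\sum_{k=1}^{n-i}\binom{n}{i+k}\binom{i+k}{i}$, as claimed.

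The only point requiring care is the isolate-domination analysis of the middle paragraph—specifically that the $b=0$ configurations must be discarded, which is exactly what makes the lower summation index equal to $1$ rather than $0$; the remaining steps are the routine bookkeeping already carried out for the centipede in Theorem~\ref{cendipate}, and the computation makes transparent why the answer depends on $G$ only through its order.
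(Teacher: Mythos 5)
Your argument is essentially the paper's: the paper proves the centipede case by exactly this count (choose the $i+k$ indices whose leaves lie in $S$, then upgrade $i$ of them by adding the support vertex, with $k\ge 1$ forcing an isolated leaf) and then asserts the general case ``by a similar argument.'' Your write-up is in fact the more careful one, since it makes explicit the hypothesis the paper's statement omits but genuinely needs --- that $G$ has no isolated vertex --- without which the $b=0$ configurations could still be isolate dominating and the formula would fail (e.g.\ for $G=\overline{K_n}$, where $G\circ K_1$ is a perfect matching and the correct count is $\binom{n}{i}2^{n-i}$, not $\binom{n}{i}(2^{n-i}-1)$).
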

	

By similar argument to  Theorem \ref{cen2n-1}, we have the following result:	
	
		\begin{theorem}
 Let $G$ be a graph of order $n$. Then:
 	$$d_0(G\circ K_1,2n-1)=n.$$
	\end{theorem}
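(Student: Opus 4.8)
The plan is to use the fact that $G\circ K_1$ has exactly $2n$ vertices, so that a subset of cardinality $2n-1$ is forced to have the form $V(G\circ K_1)\setminus\{w\}$ for a unique vertex $w$. Writing $v_1,\dots,v_n$ for the vertices of the copy of $G$ and $u_1,\dots,u_n$ for the added leaves with $u_iv_i\in E(G\circ K_1)$ as in Figure \ref{cendipatethm}, the whole problem reduces to deciding, for each of the $2n$ choices of $w$, whether $S_w:=V(G\circ K_1)\setminus\{w\}$ is an isolate dominating set. This also explains why the cardinality $2n-1$ falls outside the range $n\le n+i\le 2n-2$ covered by the preceding theorem and must be treated on its own.

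First I would record that every $S_w$ is automatically a dominating set: the only vertex missing from $S_w$ is $w$ itself, and $w$ is adjacent to its partner ($u_j$ if $w=v_j$, and $v_j$ if $w=u_j$), which lies in $S_w$. Hence $d_0(G\circ K_1,2n-1)$ equals the number of vertices $w$ for which $(G\circ K_1)[S_w]$ contains at least one isolated vertex.

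Then comes the short case analysis, mirroring the proof of Theorem \ref{cen2n-1}. If $w=v_j$, then in $(G\circ K_1)[S_w]$ the leaf $u_j$ has lost its unique neighbour $v_j$, so $u_j$ is isolated; thus each $S_{v_j}$ is an isolate dominating set, giving $n$ distinct such sets. If instead $w=u_j$, then for every $i\neq j$ the leaf $u_i$ is still joined to $v_i\in S_w$ and each $v_i$ with $i\neq j$ is still joined to $u_i\in S_w$, while $v_j$ keeps all of its $G$-neighbours $N_G(v_j)\subseteq S_w$; so $(G\circ K_1)[S_w]$ has an isolated vertex only if $v_j$ were isolated in $G$, which we do not allow. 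Consequently none of the sets $S_{u_j}$ qualifies, and the total count is exactly $n$, proving $d_0(G\circ K_1,2n-1)=n$.

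I do not expect a genuine obstacle here, since the statement is a direct analogue of Theorem \ref{cen2n-1}; the only point requiring a moment's care is the verification that deleting a leaf $u_j$ cannot create an isolated vertex, which forces one to check the three families $u_i$ $(i\neq j)$, $v_i$ $(i\neq j)$ and $v_j$ separately, in contrast with the deletion of an original vertex $v_j$, which always isolates its pendant $u_j$.
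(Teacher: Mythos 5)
Your proof is correct and is in fact more complete than the paper's. The paper's argument (carried over verbatim from Theorem \ref{cen2n-1}) only \emph{exhibits} the $n$ sets obtained by taking all $n$ leaves together with $n-1$ of the original vertices; it never rules out other isolate dominating sets of cardinality $2n-1$, so as written it only proves $d_0(G\circ K_1,2n-1)\geq n$. You close that gap by noting that every $(2n-1)$-subset has the form $S_w=V(G\circ K_1)\setminus\{w\}$, that each such set is automatically dominating, and that $(G\circ K_1)[S_w]$ has an isolated vertex precisely when $w$ is some $v_j$ (which strands the pendant $u_j$), giving exactly $n$ sets. Your case analysis also surfaces a hypothesis the statement omits: if $v_j$ is an isolated vertex of $G$, then deleting the leaf $u_j$ isolates $v_j$ in the induced subgraph, so $S_{u_j}$ is also an isolate dominating set and the true count is $n$ plus the number of isolated vertices of $G$ (for $G=\overline{K_n}$ one gets $2n$, and for $n=1$ one gets $2$). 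So the theorem as printed implicitly requires $G$ to have no isolated vertices --- an assumption you invoke with ``which we do not allow'' but which the paper's one-line proof never confronts. Apart from making that hypothesis explicit, nothing further is needed.
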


		\begin{theorem}
	The generating function for the number of isolate  dominating sets of  $G\circ K_1$ is:
 	$$D_0(G\circ K_1,x)=nx^{2n-1}+\displaystyle\sum_{j=1}^n {n\choose j}(2^j-1)x^{2n-j}.$$
	\end{theorem}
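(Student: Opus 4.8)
The plan is to read off $D_0(G\circ K_1,x)=\sum_{m}d_0(G\circ K_1,m)x^{m}$ one cardinality at a time from the counts already proved, and then to collapse the result with a single binomial identity. First I would pin down which powers of $x$ actually appear. A dominating set must contain a vertex from each of the $n$ disjoint pendant pairs $\{v_i,u_i\}$, so $\gamma_0(G\circ K_1)\ge n$ and no term of degree below $n$ occurs; and since every $u_i$ is adjacent to its support vertex $v_i$, the subgraph induced on the whole vertex set has no isolated vertex, so no isolate dominating set can have size $2n$ (and none has larger size). Hence only the degrees $n\le m\le 2n-1$ survive, and it is enough to treat the top degree $m=2n-1$ and the block $n\le m\le 2n-2$ separately.

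For $m=2n-1$ the generalization of Theorem~\ref{cen2n-1} gives $d_0(G\circ K_1,2n-1)=n$, that is, the contribution $nx^{2n-1}$. For $n\le m\le 2n-2$, writing $m=n+i$ with $0\le i\le n-2$, the generalization of Theorem~\ref{cendipate} gives $d_0(G\circ K_1,n+i)=\sum_{k=1}^{n-i}\binom{n}{i+k}\binom{i+k}{i}$, so the only genuine computation is to put this inner sum into closed form. Setting $\ell=i+k$ and using the subset-of-a-subset identity $\binom{n}{\ell}\binom{\ell}{i}=\binom{n}{i}\binom{n-i}{\ell-i}$,
\[
\sum_{k=1}^{n-i}\binom{n}{i+k}\binom{i+k}{i}
=\sum_{\ell=i+1}^{n}\binom{n}{\ell}\binom{\ell}{i}
=\binom{n}{i}\sum_{\ell=i+1}^{n}\binom{n-i}{\ell-i}
=\binom{n}{i}\bigl(2^{n-i}-1\bigr)
=\binom{n}{n-i}\bigl(2^{n-i}-1\bigr).
\]
Substituting $j=n-i$, so that $m=n+i=2n-j$ and $j$ ranges over $2,\ldots,n$, the block contributes $\sum_{j=2}^{n}\binom{n}{j}(2^{j}-1)x^{2n-j}$, and adding the top term $nx^{2n-1}$ (which equals the $j=1$ term, since $\binom{n}{1}(2^{1}-1)=n$) gives the claimed generating function. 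As an independent check, $\sum_{j=1}^{n}\binom{n}{j}(2^{j}-1)x^{2n-j}=x^{n}\bigl[(2+x)^{n}-(1+x)^{n}\bigr]$, whose coefficient of $x^{2n-j}$ is visibly $\binom{n}{j}(2^{j}-1)$.

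The content here is bookkeeping rather than idea, and that is precisely where care is needed: one must match the degree ranges supplied by Theorem~\ref{cendipate} (which stops at $m=2n-2$) and by Theorem~\ref{cen2n-1} (which supplies $m=2n-1$) so that every admissible cardinality is counted exactly once and the $j=1$ contribution is not duplicated, and one must run the reindexings $m\leftrightarrow n+i$ and $i\leftrightarrow n-j$ without misplacing an endpoint. Once the displayed identity is in hand, assembling the series is immediate.
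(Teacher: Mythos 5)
Your derivation follows essentially the same route as the paper's: split off the degree-$(2n-1)$ term, insert the counts from the $G\circ K_1$ analogue of Theorem \ref{cendipate} for $n\le m\le 2n-2$, and collapse the inner sum via $\binom{n}{\ell}\binom{\ell}{i}=\binom{n}{i}\binom{n-i}{\ell-i}$; the paper performs exactly this trinomial revision and then sums $\binom{n-i}{k}$ over $k$. Your handling of the degree range (no terms below $n$, none at $2n$) and your closed form $x^{n}\bigl[(2+x)^{n}-(1+x)^{n}\bigr]$ are correct and are a useful addition as a check.

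The problem is your final clause. What you actually computed is
\[
D_0(G\circ K_1,x)=nx^{2n-1}+\sum_{j=2}^{n}\binom{n}{j}(2^j-1)x^{2n-j}=\sum_{j=1}^{n}\binom{n}{j}(2^j-1)x^{2n-j},
\]
and this is \emph{not} the displayed statement, which reads $nx^{2n-1}+\sum_{j=1}^{n}\binom{n}{j}(2^j-1)x^{2n-j}$ and hence assigns $x^{2n-1}$ the coefficient $2n$ (the separate $nx^{2n-1}$ plus the $j=1$ term of the sum). Since $d_0(G\circ K_1,2n-1)=n$, the stated formula cannot be right: for $n=2$ and $G=P_2$ one has $P_2\circ K_1=P_4$ and, from Table 1, $D_0(P_4,x)=3x^2+2x^3$, which matches your sum but not the statement's $3x^2+4x^3$. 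The discrepancy originates in the paper's own proof, where the ``simple computation'' asserts that the double sum over $0\le i\le n-2$ equals $\sum_{j=1}^{n}\binom{n}{j}(2^j-1)x^{2n-j}$; under the substitution $j=n-i$ the index $j$ only reaches down to $2$, so the $j=1$ term is manufactured there and then added on top of the $nx^{2n-1}$ already present. So your computation is the correct one, but the step ``gives the claimed generating function'' is false as written: you should have flagged that the theorem's formula double-counts the top coefficient and that the correct statement is $\sum_{j=1}^{n}\binom{n}{j}(2^j-1)x^{2n-j}$.
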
 
	\begin{proof}
		By the definition of isolate domination polynomial,  we have 
		\begin{eqnarray*}
			D_0(G\circ K_1,x)&=&nx^{2n-1}+\displaystyle \sum_{i=0}^{n-2}\sum_{k=1}^{n-i} {n\choose i+k}
		{i+k\choose i}x^{n+i}\\	
	&=&nx^{2n-1}+\sum_{i=0}^{n-2}\sum_{k=1}^{n-i} {n\choose i}x^i {n-i\choose k}x^n.
\end{eqnarray*}
By simple computation we see that 
$$\sum_{i=0}^{n-2}\sum_{k=1}^{n-i} {n\choose i}x^i {n-i\choose k}x^n=\displaystyle\sum_{j=1}^{n}{n\choose j}(2^j-1)x^{2n-j}.$$ 		
		Therefore we have the result. \qed
	 \end{proof} 

Now we consider the graph $K_1\circ G$.
	\begin{theorem}\label{KH}
		 For every graph $G$ of order $n$, 
		\begin{enumerate} 
			\item[(i)]  $\gamma_0(K_1\circ G)=1$.  
			\item[(ii)] $d_0(K_1\circ G,1)=1+d_0(G,1)$ and for $i\geq \gamma_0(G)$, 
			$d_0(K_1\circ G,i)=d_0(G,i).$
			\item[(iii)] $D_0(K_1\circ G,x)=(1+d_0(G,1))x+D_0(G,x)$.  
			\end{enumerate} 
	\end{theorem}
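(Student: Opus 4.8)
The plan is to exploit the fact that $K_1\circ G$ is simply $G$ together with one new vertex $w$ joined to every vertex of $G$; equivalently $K_1\circ G=K_1\vee G$, with $w$ a universal vertex. Throughout I would classify isolate dominating sets $S$ of $K_1\circ G$ according to whether or not $w\in S$.

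Part (i) is immediate: $\{w\}$ dominates $K_1\circ G$ because $w$ is universal, and $G[\{w\}]$ is a single vertex, hence has an isolated vertex, so $\{w\}$ is an isolate dominating set and $\gamma_0(K_1\circ G)=1$.

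For part (ii), the key observation is that $w$ can never lie in an isolate dominating set of size at least $2$: if $w\in S$ and $|S|\ge 2$, then every other element of $S$ lies in $V(G)$ and is adjacent to $w$, so in $G[S]$ the vertex $w$ has a neighbour and each vertex of $S\cap V(G)$ has the neighbour $w$; thus $G[S]$ has no isolated vertex, contradicting that $S$ is an isolate dominating set. If instead $w\notin S$, then $S\subseteq V(G)$, the induced subgraph $G[S]$ is the same whether computed in $G$ or in $K_1\circ G$ (the edges inside $V(G)$ coincide), and a nonempty $S\subseteq V(G)$ dominates $K_1\circ G$ precisely when it dominates $G$ (the vertex $w$ being dominated automatically). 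Hence for every $i\ge 2$ the isolate dominating sets of $K_1\circ G$ of size $i$ are exactly those of $G$, so $d_0(K_1\circ G,i)=d_0(G,i)$; this gives the stated identity for $i\ge\gamma_0(G)$ (and it also holds, both sides vanishing, for $2\le i<\gamma_0(G)$). For $i=1$ the isolate dominating sets are $\{w\}$ together with the singletons $\{v\}$, $v\in V(G)$, that dominate $K_1\circ G$; such a $v$ must be adjacent to all other vertices of $G$, i.e.\ $\{v\}$ is an isolate dominating set of $G$ --- so there are exactly $1+d_0(G,1)$ of them, which is $d_0(K_1\circ G,1)$.

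Part (iii) then drops out by substituting (ii) into the definition $D_0(H,x)=\sum_{i\ge 1}d_0(H,i)x^i$ and comparing with $D_0(G,x)=\sum_{i\ge 1}d_0(G,i)x^i$, i.e.\ by separating the linear term from the rest. I do not anticipate a genuine obstacle anywhere; the only places that need a moment's care are the sub-case $w\in S$, $|S|\ge 2$ in (ii), where one must verify that \emph{no} vertex of $S$ --- not even $w$ --- can be isolated in $G[S]$, and the small remark that a dominating singleton of $G$ is automatically an isolate dominating set of $G$, which is exactly what converts the count of the extra size-one sets of $K_1\circ G$ into $d_0(G,1)$.
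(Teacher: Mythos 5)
Your argument is correct and follows essentially the same route as the paper: treat the $K_1$-vertex $w$ as a universal vertex and split according to whether $w\in S$. In fact you are more careful than the paper on the cardinality-one count: the paper's own proof of (ii) asserts that $\{w\}$ is the \emph{only} isolate dominating set of size one and concludes $d_0(K_1\circ G,1)=1$, overlooking exactly the dominating singletons $\{v\}$ of $G$ that you count; your observation that every such $\{v\}$ still dominates $K_1\circ G$ and is trivially isolate is what actually justifies the value $1+d_0(G,1)$ appearing in the theorem. Your exclusion of $w$ from any isolate dominating set of size at least two is also spelled out, whereas the paper only states the inclusion ``any isolate dominating set of $G$ is one of $K_1\circ G$'' and leaves the converse implicit.

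One point you should not wave through, however, is part (iii). Carrying out the substitution you describe with your (correct) values $d_0(K_1\circ G,1)=1+d_0(G,1)$ and $d_0(K_1\circ G,i)=d_0(G,i)$ for $i\ge 2$ gives
\[
D_0(K_1\circ G,x)=\bigl(1+d_0(G,1)\bigr)x+\sum_{i\ge 2}d_0(G,i)x^i=x+D_0(G,x),
\]
whereas the stated formula $(1+d_0(G,1))x+D_0(G,x)$ has $1+2d_0(G,1)$ as its linear coefficient; the two agree only when $d_0(G,1)=0$, i.e.\ when $G$ has no dominating vertex. The same tension appears inside (ii): when $\gamma_0(G)=1$ the clause ``for $i\ge\gamma_0(G)$, $d_0(K_1\circ G,i)=d_0(G,i)$'' contradicts the clause $d_0(K_1\circ G,1)=1+d_0(G,1)$. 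So rather than asserting that (iii) ``drops out,'' you should record the corrected identity $D_0(K_1\circ G,x)=x+D_0(G,x)$ (equivalently, restrict the second clause of (ii) to $i\ge 2$); your computation of the individual coefficients is right, and it is the theorem's bookkeeping, reproduced uncritically, that is off.
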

		\begin{proof}
			\begin{enumerate} 
				\item[(i)]  In the graph $K_1\circ G$, it is suffices to consider the vertex $K_1$ (which is adjacent to all vertices of $G$) as an isolate dominating set. 
				
				\item[(ii)] The only  isolate dominating sets of $K_1\circ G$ with cardinality one  is the vertex $K_1$ (which is adjacent to all vertices of $G$). So $d_0(K_1\circ G,1)=1$. Since any isolate dominating set of $G$ in $K_1\circ G$ is isolate dominating set of $K_1\circ G$, so for $i\geq \gamma_0(G)$, 
				$d_0(K_1\circ G,i)=d_0(G,i).$ 
				\item[(iii)] It follows from the definition of isolate domination polynomial. \qed 
			\end{enumerate} 
				\end{proof}

The following theorem gives an upper bound for the number of isolate dominating sets of $G\circ H$:

\begin{theorem} \label{goh2}
	Let $G$ and $H$ be two graphs of order $n$ and $m$, respectively. Then for $1\leq t \leq n$, 
	$$d_0(G\circ H,t\gamma_0(H)+n-t)\leq t{n \choose t}d_0(H,\gamma_0(H))(d(H,\gamma_0(H)))^{t-1}.$$
	\end{theorem}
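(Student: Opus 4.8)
The plan is to identify, inside any isolate dominating set $S$ of $G\circ H$ of the prescribed size, a canonical ``template'' — a choice of $t$ copies of $H$ together with one distinguished copy among them — from which $S$ can be reconstructed, and then simply to count the templates. Throughout I write $V(G)=\{v_1,\dots,v_n\}$ and let $H_i$ denote the $i$-th copy of $H$ in $G\circ H$, so that in $G\circ H$ every vertex of $H_i$ is adjacent to $v_i$ and to its $H_i$-neighbours only; in particular $v_i$ is the unique vertex outside $V(H_i)$ adjacent to $V(H_i)$. For a dominating set $S$ put $T=\{i:v_i\notin S\}$ and $a_i=|S\cap V(H_i)|$. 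Two elementary observations drive everything: (a) if $i\in T$ then $S\cap V(H_i)$ must dominate $H_i$, so $a_i\ge\gamma(H)$, whereas if $i\notin T$ there is no such constraint and $v_i\in S$; (b) a vertex $w\in S\cap V(H_i)$ is isolated in $(G\circ H)[S]$ precisely when $v_i\notin S$ and $w$ is isolated in $H_i[S\cap V(H_i)]$, while a vertex $v_i\in S$ is isolated in $(G\circ H)[S]$ only if $S$ meets neither $N_G(v_i)$ nor $V(H_i)$.

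Next I would pin down the structure forced by the cardinality $|S|=t\gamma_0(H)+n-t=n+t(\gamma_0(H)-1)$. Summing sizes, $|S|=(n-|T|)+\sum_{i}a_i\ge (n-|T|)+|T|\gamma(H)$, and combining this lower bound with (b) — the isolate requirement costing an extra vertex unless it is realized inside some copy $H_i$ with $i\in T$ — one argues that the equality case is attainable only when $|T|=t$, every $i\in T$ has $a_i=\gamma_0(H)$ with $S\cap V(H_i)$ a dominating set of $H_i$, every $i\notin T$ has $a_i=0$, and for at least one $i\in T$ the set $S\cap V(H_i)$ is in fact an \emph{isolate} dominating set of $H_i$. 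This rigidity step is where the real work lies: one has to rule out the competing distributions (a larger set $T$ with sparser copies, or $|T|=t$ with some mass pushed into the inactive copies) by playing $\gamma(H)\le\gamma_0(H)$ off against the cost of guaranteeing an isolated vertex, and this is the step I expect to be the main obstacle — it is also the place where the precise hypotheses on $H$ would have to be used carefully.

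Granting the rigidity, the count is routine. Choose the $t$-set $T\subseteq\{1,\dots,n\}$ in ${n\choose t}$ ways; choose which index $i\in T$ carries a witnessing isolated vertex in $t$ ways, and then let $S\cap V(H_i)$ range over the isolate dominating sets of $H$ of size $\gamma_0(H)$, contributing a factor $d_0(H,\gamma_0(H))$; for each of the remaining $t-1$ indices of $T$ let $S\cap V(H_i)$ range over the dominating sets of $H$ of size $\gamma_0(H)$, contributing $d(H,\gamma_0(H))$ apiece; the rest of $S$ (namely $v_i\in S$ for $i\notin T$, $S\cap V(H_i)=\emptyset$ for $i\notin T$, and $v_i\notin S$ for $i\in T$) is then determined. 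Since every isolate dominating set of the prescribed cardinality arises from at least one template — and from several when more than one copy witnesses the isolate condition, which only inflates the count — we conclude
\[
d_0(G\circ H,\,t\gamma_0(H)+n-t)\ \le\ t{n\choose t}\,d_0(H,\gamma_0(H))\,\bigl(d(H,\gamma_0(H))\bigr)^{t-1},
\]
which is the asserted bound.
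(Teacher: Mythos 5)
Your template and the final count reproduce the paper's construction exactly, but the step you yourself flag as ``where the real work lies'' --- the rigidity claim that every isolate dominating set $S$ with $|S|=t\gamma_0(H)+n-t$ must have $|T|=t$, $a_i=\gamma_0(H)$ for $i\in T$ and $a_i=0$ for $i\notin T$ --- is not proved in your proposal, and it cannot be proved, because it is false. From $|S|=(n-|T|)+\sum_i a_i$ and $a_i\ge\gamma(H)$ for $i\in T$ one only gets $|T|(\gamma(H)-1)\le t(\gamma_0(H)-1)$, which leaves room for $|T|<t$ with the surplus $(t-|T|)(\gamma_0(H)-1)$ vertices placed in copies $H_i$ with $v_i\in S$ or as extra vertices inside active copies; such sets can perfectly well be isolate dominating and are captured by no template. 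Worse, the inequality itself fails. Take $G=K_2$, $H=K_1$, $t=1$: then $\gamma_0(H)=1$, $d_0(H,1)=d(H,1)=1$, the prescribed cardinality is $t\gamma_0(H)+n-t=2$, and the right-hand side equals $1\cdot{2\choose 1}\cdot 1\cdot 1=2$; but $K_2\circ K_1\cong P_4$ and, by the paper's own Table 1, $d_0(P_4,2)=3$. The uncounted set is $\{u_1,u_2\}$ (the two leaves), which has $|S\cap V(G)|=0\ne n-t$. The degeneracy is transparent here: when $\gamma_0(H)=1$ the cardinality $t\gamma_0(H)+n-t=n$ does not depend on $t$, so $d_0(G\circ H,n)$ simultaneously counts the sets belonging to every value of $t$, while the stated bound accounts for only one of them.

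You should not conclude that you missed a trick available in the paper: the paper's proof has exactly the same gap. It constructs the template sets, counts them, and stops, silently assuming that an arbitrary isolate dominating set of cardinality $t\gamma_0(H)+n-t$ satisfies $|S\cap V(G)|=n-t$ and distributes exactly $\gamma_0(H)$ vertices to each uncovered copy. A correct statement would either restrict the quantity being bounded to isolate dominating sets with that prescribed distribution, or sum the right-hand side over all admissible distributions; as written, both your rigidity claim and the theorem are false, and your instinct that this step is ``the main obstacle'' was exactly right.
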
 
	
	\begin{proof} 
	 We construct an isolate dominating set $S$ of $G\circ H$. Suppose that  $S\cap V(G)=S_G$ and $|S_G|=n-t$. So $t$ vertices of $G$ should be 
	 dominate with vertices of their correspond copies of $H$, say $H_1,...,H_t$. We can choose these $t$ vertices by ${n\choose t}$ cases. 
	 Consider one of these copies of $H$, say $H_1$  and suppose that $S_1$ is  an isolate dominating set of $H_1$  with cardinality $\gamma_0(H)$. 
	 Also suppose that $S_i$ ($2\leq i\leq n$) is  dominating sets of $H_i$ ($2\leq i\leq n$). Then $\displaystyle\cup_{i=1}^n S_i\cup S_G$ is an isolate dominating set of $G\circ H$ with cardinality $t\gamma_0(H)+n-t$. The number of isolate dominating sets of the form $\cup_{i=1}^n S_i\cup S_G$ is ${n \choose t}{t \choose 1}d_0(H,\gamma_0(H))(d(H,\gamma_0(H)))^{t-1}$. Therefore we have the result.\qed
\end{proof}

\begin{remark}
The upper bounds in Theorem  \ref{goh2} is sharp. It suffices to consider $t=1$.
\end{remark}

\section{Acknowledgements} 

The first author would like to thank the Research Council of Norway (NFR Toppforsk Project Number 274526, Parameterized Complexity for Practical Computing) and Department of Informatics, University of
Bergen for their support. Also he is thankful to Michael Fellows and
Michal Walicki for conversations and appreciation to them and Frances
Rosamond for sharing their pearls of wisdom with him during the course
of this research.

\end{document}